\newcommand{\cross}{\times}
\newcommand{\g}{{\frak g}}
\newcommand{\s}{{\frak s}}
\newcommand{\gb}{{\frak b}}
\newcommand{\ga}{{\frak a}}
\newcommand{\cA}{{\cal A}}
\newcommand{\cG}{{\cal G}}
\newcommand{\cF}{{\cal F}}
\newcommand{\bA}{{\bf A}}
\newcommand{\End}{{\cal E}{\it nd}}
\newcommand{\Aut}{{\cal A}{\it ut}}
\newcommand{\St}{{\cal S}{\it t}}
\newcommand{\Tr}{{\cal T}{\it r}}
\newcommand{\Tw}{{\bf Tw}}
\newcommand{\Gal}{{\bf Gal}}
\newcommand{\CYB}{{\cal CYB}}
\newcommand{\Tens}{{\bf Tens}}
\newtheorem{theo}[subsubsection]{Theorem}
\newtheorem{lem}[subsubsection]{Lemma}
\newtheorem{prop}[subsubsection]{Proposition}
\newtheorem{cor}[subsubsection]{Corollary}
\newtheorem{exam}[subsubsection]{Example}
\newtheorem{rem}[subsubsection]{Remark}
\begin{document}
\author{A. Davydov}
\title{Twisted automorphisms of Hopf algebras}
\maketitle
\begin{center}
Department of Mathematics, Division of Information and Communication Sciences, Macquarie University, Sydney, NSW 2109,
Australia
\end{center}
\begin{center}
davydov@math.mq.edu.au
\end{center}
\begin{abstract}
Twisted homomorphisms of bialgebras are bialgebra homomorphisms from the first into Drinfeld twistings of the second.
They possess a composition operation extending composition of bialgebra homomorphisms. Gauge transformations of twists,
compatible with adjacent homomorphisms, give rise to gauge transformation of twisted homomorphisms, which behave nicely
with respect to compositions. Here we study (gauge classes of) twisted automorphisms of cocommutative Hopf algebras.
After revising well-known relations between twists, twisted forms of bialgebras and $R$-matrices (for commutative
bialgebras) we describe twisted automorphisms of universal enveloping algebras.
\end{abstract}
\section{Introduction}
The aim of the paper is to study ``hidden" symmetries of bialgebras which manifest themselves in representation theory.
It is very well-known that categories of representations (modules) of bialgebras are examples of so-called {\em
monoidal categories}. It is less acknowledged that relations between bialgebras ({\em homomorphisms of bialgebras}) do
not capture all relations ({\em monoidal functors}) between their representation categories. An algebraic notion which
does the job ({\em bi-Galois (co)algebra}) is known only to specialists. They fully represent monoidal relations between
representation categories but are sometimes not very easy to work with. For example, composition of monoidal functors
corresponds to tensor product of Galois (co)algebras and in some situations is quite tricky to calculate explicitly. At
the same time monoidal functors of interest could have some additional properties which put restrictions on the
corresponding algebraic objects and allow one to have an alternative and perhaps simpler description.

Note that any representation category is equipped for free with a monoidal functor to the category of vector spaces,
the functor forgetting the action of the bialgebra ({\em forgetful functor}). Here we deal with monoidal functors
between representation categories which preserve (not necessarily monoidally) the forgetful functors. The corresponding
algebraic notion is of {\em twisted homomorphism}. After defining them in the first section we examine algebraic
counterparts of the composition (composition of twisted homomorphisms) and natural transformations of monoidal functors
({\em gauge transformations}).

The main object to study for us is the category of twisted automorphisms of a cocommutative Hopf algebra. We start by
re-examining the actions of twisted automorphisms on twists (twisted homomorphisms from the ground field) and
$R$-matrices. After that we treat the case of a universal enveloping algebra of a Lie algebra (over formal power
series). It turns out that any twisted automorphism is a bialgebra automorphism together with an invariant twist ({\em
separated} case). The gauge classes of invariant twists form an abelian group isomorphic to the invariant elements of
the exterior square of the Lie algebra. The group of gauge classes of twisted automorphisms is a crossed product of the
group of automorphisms of the Lie algebra and the group of invariant twists.

Throughout the paper $k$ be a ground field, which is supposed to be algebraically closed of characteristic zero.

\section*{Acknowledgment}
The paper was started during the author's visit to the Max-Planck Institut f\" ur Mathematik (Bonn). The author would
like to thank this institution for hospitality and excellent working conditions. The work on the paper was supported by
Australian Research Council grant DP00663514. The author thanks V. Lyakhovsky and A. Stolin for stimulating discussions.
Special thanks are due to R. Street for invaluable support during the work on the paper.

\section{Twisted homomorphisms of bialgebras and monoidal functors between categories of modules}

\subsection{Twisted homomorphisms of bialgebras}
Here we recall the notions of twisted homomorphisms of bialgebras and
their transformations and show how they enrich the category of bialgebras making it a 2-category.

A {\em twisted homomorphism} of bialgebras $(H,\Delta,\varepsilon)\to (H',\Delta',\varepsilon')$ is a pair $(f,F)$
where $f:H\to H'$ is a homomorphism of algebras and $F$ is an invertible element of $H'\otimes H'$ ({\em $f$-twist} or
simply {\em twist}) such that
\begin{equation}\label{conj}
F\Delta'(f(x)) = (f\otimes f)\Delta(x)F,\quad \forall x\in H,
\end{equation}
$$(F\otimes 1)(\Delta'\otimes I)(F) = (1\otimes F)(I\otimes\Delta')(F),\quad \mbox{2-cocycle condition}$$
$$(\varepsilon\otimes I)(F) = (I\otimes\varepsilon)(F) = 1,\quad \mbox{normalisation}.$$

For example, a homomorphism of bialgebras $f:H\to H'$ is a twisted homomorphism with the identity twist $(f,1)$. A
twisted homomorphism is {\em separated} if the first component $f:H\to H'$ is a homomorphism of bialgebras. For a
separated twisted homomorphism the condition (\ref{conj}) amounts to the invariance of the twist with respect to the
sub-bialgebra $f(H)\subset H'$: $$\Delta'(f(x))F = F\Delta'(f(x)),\quad \forall x\in H.$$

Note that the 2-cocycle and normalisation conditions for a twist $F$ imply the coassociativity and counitality
respectively of the {\em twisted} coproduct on $H'$: $$(\Delta')^F(x) = F\Delta'(x)F^{-1}.$$ The condition (\ref{conj})
says that $f:H\to {(H')}_F$ is a homomorphism of bialgebras, where ${H'}_F = (H',(\Delta')^F)$ is the twisted
bialgebra. Note also that the condition on $F\in H'\otimes H'$ equivalent to the coassociativity of the twisted
coproduct $(\Delta')^F$, is $H'$-invariance of $$(\Delta'\otimes I)(F)^{-1}(F\otimes 1)^{-1}(1\otimes
F)(I\otimes\Delta')(F).$$ One could call such elements {\em quasi-twists}. Then a {\em quasi-twisted homomorphism} of
bialgebras $(f,F):H\to H'$ is a bialgebra homomorphisms $f:H\to {H'}_F$ for a quasi-twist $F$. Although, being a
partial case of the general picture of twists for quasi-bialgebras (see \cite{dr}), quasi-twisted homomorphism are
interesting in their own right. For instance, Drinfeld's rectification of Kohno's monodromy theorem establishes the
existence of a quasi-twisted isomorphism between quantised and classical universal enveloping algebras $U_q(\g)[[h]]\to
U(\g)[[h]]$ (see \cite{dr}). For another example see \cite{da}, where it was proved that finite groups have the same
character tables if and only if there is a quasi-twisted isomorphism between their group algebras (over algebraically
closed field of characteristic zero). Being much more versatile, quasi-twisted homomorphisms lack one important
property valid for twisted homomorphisms. There is no composition operation defined for general quasi-twisted
homomorphisms. It is related to the fact that (in contrast to twists) the image of a quasi-twist under a bialgebra
homomorphism is not necessarily a quasi-twist.

For succesive twisted homomorphisms $$\xymatrix{(H,\Delta,\varepsilon)\ar[r]^{(f,F)} &
(H',\Delta',\varepsilon')\ar[r]^{(f',F')} & (H'',\Delta'',\varepsilon'')}$$ we define their {\em composition} as
$$(f',F')\circ (f,F) = (f'f,f'(F)F').$$ Here $f'(F) = (f'\otimes f')(F)$. It is not hard to verify that the result is a
twisted homomorphism $(H,\Delta,\varepsilon)\to (H'',\Delta'',\varepsilon'')$ and that the composition is associative.
See also section \ref{catint}. Note that separated twisted homomorphisms are closed under composition.

By a {\em gauge transformation} $(f,F)\to(f',F')$ of twisted homomorphisms $(f,F),(f',F'):(H,\Delta,\varepsilon)\to
(H',\Delta',\varepsilon')$ we will mean an element $a$ of $H'$ such that
\begin{equation}\label{comgt}
af(x) = f'(x)a,\quad \forall x\in H,
\end{equation}
\begin{equation}\label{mongt}
F\Delta'(a) = (a\otimes a)F'.
\end{equation}
We will depict it graphically as follows: $$\xymatrix{(H,\Delta,\varepsilon) {\ar@/^20pt/[rr]^{(f,F)} }\glabelar{s}
{\ar@/_20pt/[rr]_{(f',F')}  }\glabelar{t} & & (H',\Delta',\varepsilon') \gar[s,t]^{a} } $$ Note that the condition
(\ref{mongt}) together with normalisation conditions for $F$ and $G$ implies $\varepsilon(a) = 1$. Note also that a
twisted homomorphism gauge isomorphic to a separated twisted homomorphism is not necessarily separated.

For successive gauge transformations
$$\xymatrix{(H,\Delta,\varepsilon) {\ar@/^30pt/[rr]^{(f,F)}}\glabelar{s}
{\ar[rr]_{(g,G)}}\glabelar{m} {\ar@/_30pt/[rr]_{(j,J)}}\glabelar{t} & & (H',\Delta',\varepsilon') \gar[s,m]^{a} \gar[m,t]^{b}}$$
the composition $b.a:(f,F)\to(j,J)$ is simply the product $ba$ in $H'$. Again it is quite
straightforward to check that this is a transformation.

Note that if $H'$ is a Hopf algebra, any gauge transformation between twisted homomorphisms into $H'$ is invertible.
This fact can be checked by pure algebraic computations. A sketch of a different proof will be given in section
\ref{catint}.

We can also define compositions of transformations and twisted homomorphisms in the following two situations:
$$\xymatrix{(H,\Delta,\varepsilon) {\ar@/^20pt/[rr]^{(f,F)}}\glabelar{s} {\ar@/_20pt/[rr]_{(f',F')}}\glabelar{t} & &
(H',\Delta',\varepsilon') \ar[rr]^{(g,G)} & & (H'',\Delta'',\varepsilon'') \gar[s,t]^{a} }$$
$$\xymatrix{(H,\Delta,\varepsilon) \ar[rr]^{(f,F)} & & (H',\Delta',\varepsilon') {\ar@/^20pt/[rr]^{(g,G)}}\glabelar{s}
{\ar@/_20pt/[rr]_{(g',G')}}\glabelar{t} & & (H'',\Delta'',\varepsilon'') \gar[s,t]^{b}}$$ we define it to be
$(g,G)\circ a = g(a)$ in the first case and $b\circ(f,F) = b$ in the second. The following properties intertwining
compositions of twisted homomorphisms and gauge transformations are quite straightforward consequences of the
definitions: $$(a.b)\circ(f,F) = (a\circ(f,F)).(b\circ(f,F)),$$ $$(g,G)\circ(a.b) = ((g,G)\circ a).((g,G)\circ b),$$
$$(a\circ(f,F)).((g,G)\circ b) = ((g,G)\circ b).(a\circ(f,F)).$$ For an alternative explanation see section
\ref{catint}.

Note that the structures described above extend the category of bialgebras and twisted homomorphisms to a 2-category
$\Tw$ with gauge transformations as 2-cells (2-morphisms).

Twisted homomorphisms have certain natural involutive symmetry.
\begin{lem}\label{transp}
If $(f,F):H\to H'$ is a twisted homomorphism, then $t(f,F) = (f,t(F)):H^{co}\to (H')^{co}$ is a twisted homomorphism
between bialgebras with opposite comultiplication.
\end{lem}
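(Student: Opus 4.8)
The plan is to verify, one by one, the three defining conditions for $(f,t(F))$ to be a twisted homomorphism $H^{co}\to (H')^{co}$, transporting each condition for $(f,F)$ along an appropriate tensor-factor-reversing map. Since $H^{co}$ and $(H')^{co}$ have the same underlying algebras as $H$ and $H'$, the map $f$ is still an algebra homomorphism and the first requirement is automatic. Writing $\tau$ for the flip $a\otimes b\mapsto b\otimes a$ on $H'\otimes H'$, we have $t(F)=\tau(F)$, the opposite comultiplications are $\Delta^{co}=\tau\Delta$ and $(\Delta')^{co}=\tau\Delta'$, and the counits are unchanged.

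For the conjugation condition (\ref{conj}), I would simply apply $\tau$ to both sides of $F\Delta'(f(x))=(f\otimes f)\Delta(x)F$. The crucial observation is that $\tau$ is an \emph{algebra} homomorphism of $H'\otimes H'$, so it turns products into products; combined with the identities $\tau\Delta'=(\Delta')^{co}$, $(f\otimes f)\tau=\tau(f\otimes f)$ and $\tau\Delta=\Delta^{co}$, this yields at once $$t(F)(\Delta')^{co}(f(x))=(f\otimes f)\Delta^{co}(x)\,t(F),$$ which is precisely (\ref{conj}) for $(f,t(F))$.

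The 2-cocycle condition lives in $(H')^{\otimes 3}$, and here I would use the total reversal $\rho:a\otimes b\otimes c\mapsto c\otimes b\otimes a$, which is again an algebra homomorphism. A brief Sweedler-index check gives $$\rho(F\otimes 1)=1\otimes t(F),\quad \rho(1\otimes F)=t(F)\otimes 1,$$ $$\rho\big((\Delta'\otimes I)(F)\big)=(I\otimes(\Delta')^{co})(t(F)),\quad \rho\big((I\otimes\Delta')(F)\big)=((\Delta')^{co}\otimes I)(t(F)).$$ Applying $\rho$ to the cocycle equation $(F\otimes 1)(\Delta'\otimes I)(F)=(1\otimes F)(I\otimes\Delta')(F)$ and using multiplicativity of $\rho$ then produces exactly the cocycle equation for $t(F)$ relative to $(\Delta')^{co}$ (with its two sides interchanged). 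The normalisation is immediate, since $(\varepsilon'\otimes I)(t(F))=(I\otimes\varepsilon')(F)=1$ and symmetrically.

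The argument is essentially formal: the whole content is that opposing the comultiplication is implemented on elements of $H'\otimes H'$ by $\tau$ and on the cocycle identity by $\rho$, both of which are algebra maps. The only point requiring care is the bookkeeping of Sweedler indices in the cocycle step, to confirm that $\rho$ interchanges the two ``face'' operators correctly once $\Delta'$ is replaced by $(\Delta')^{co}$; I do not expect any genuine obstacle beyond this routine verification.
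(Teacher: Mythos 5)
Your proof is correct and takes essentially the same route as the paper's: the paper likewise transports condition (\ref{conj}) and the normalisation along the flip $t$, and the 2-cocycle equation along the total reversal of the three tensor factors, written there as $t_2t_1t_2 = t_1t_2t_1$, which is exactly your $\rho$. Your explicit appeal to the multiplicativity of $\tau$ and $\rho$ and the four Sweedler-index identities merely spell out what the paper's displayed computation uses implicitly, so no further verification is needed.
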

\begin{proof}
The condition (\ref{conj}) for $(f,t(F))$ is the transpose (the result of applying $t$) of the condition (\ref{conj})
for $(f,F)$. The same with the normalisation condition for $t(F)$. The 2-cocycle equation for $t(F)$ is the 2-cocycle
equation for $F$ where the first and the last tensor factor have been interchanged: $$(t(F)\otimes 1)(t\Delta'\otimes
I)t(F) = (t_2t_1t_2)((1\otimes F)(I\otimes\Delta')(F)) = $$ $$(t_1t_2t_1)((F\otimes 1)(\Delta'\otimes I)(F)) =
(1\otimes t(F))(I\otimes t\Delta')t(F).$$
\end{proof}
Note that if we had a gauge transformation $a:(f,F)\to(g,G)$, then $a:t(f,F)\to t(g,G)$ is also a gauge transformation.
Moreover, all compositions are compatible with $t$. In (2-)categorical language $t:\Tw\to\Tw$ is a 2-functor
(2-isomorphism).

We call a twisted homomorphism $(f,F):H\to H'$ of cocommutative bialgebras {\em symmetric} if $t(f,F) = (f,F)$. Note
that a twisted homomorphism gauge isomorphic to a symmetric twisted homomorphism is symmetric itself.

\subsection{Twisted homomorphisms and Galois (co)algebras}\label{galtw}
It was observed by Drinfeld \cite{dr0} that a twist $F$ on a bialgebra $H$ defines a new coalgebra structure on $H$:
$$\Delta_F(x) = F\Delta(x),\quad x\in H.$$ Coassociativity for $\Delta_F$ is equivalent to the 2-cocycle condition on
$F$. The property $\Delta_F(xy) = \Delta_F(x)\Delta(y)$ guarantees that the right $H$-module structure on $H$ is an
$H$-module coalgebra structure on $(H,\Delta_F)$. Recall that a coalgebra $(C,\delta)$ is  an {\em $H$-module
coalgebra} if $C$ is an $H$-module $a:C\otimes H\to C$ and $$\delta(xy) = \delta(x)\Delta(y),\quad x\in C, y\in H.$$ An
$H$-module coalgebra $C$ is {\em Galois} if the composition $$\xymatrix{ C\otimes H \ar[r]^{\delta\otimes I} & C\otimes
C\otimes H \ar[r]^{I\otimes a} & C\otimes C }$$ is an isomorphism. It is not hard to see that if $H$ is a Hopf algebra
$(H,\Delta_F)$ is a Galois $H$-module coalgebra.  Moreover Galois $H$-module coalgebras of the form $(H,\Delta_F)$ are
characterised by the so-called normal basis property. A Galois $H$-module coalgebra $C$ has a {\em normal basis} if $C$
is isomorphic to $H$ as an $H$-module.

For a twisted homomorphism $(f,F):H\to H'$, the $H'$-Galois coalgebra $(H',\Delta_F)$ comes equipped with the left
$H$-action $H\otimes H'\to H'$ sending $x\otimes y$ to $f(x)y$. It follows from the definition of twisted homomorphism
that this action preserves the coproduct $\Delta_F$: $$F\Delta'(f(x)y) = F\Delta'(f(x))\Delta'(y) = (f\otimes
f)\Delta(x)F\Delta'(y).$$

Composition of twisted homomorphisms corresponds to the following operation on Galois coalgebras. Let $C$ be a Galois
$H'$-module coalgebra and $C'$ be a Galois $H''$-module coalgebra with compatible left $H'$-action. Then the tensor
product of $H'$-modules $C\otimes_{H'}C'$ is a Galois $H''$-coalgebra with respect to the coproduct
$t_{23}(\delta\otimes\delta')$. Moreover a left $H$-action on $C$ compatible with $H'$-module coalgebra structure will
pass on to $C\otimes_{H'}C'$.

For successive twisted homomorphisms $$\xymatrix{(H,\Delta,\varepsilon)\ar[r]^{(f,H)} &
(H',\Delta',\varepsilon')\ar[r]^{(f',F')} & (H'',\Delta'',\varepsilon'')}$$ the tensor product
$(H',\Delta_F)\otimes_{H'}(H'',\Delta_{F'})$ is isomorphic to $H''$ as an $H-H''$-bimodule via $x\otimes y\mapsto
f'(x)y$. Moreover, the tensor product of comultiplications $\Delta_F\otimes\Delta_{F'}$ is carried out into the
comultiplication $\Delta_{f(F)F'}$. Indeed, the tensor product of comultiplications followed by the isomorphism sends
$x\otimes y$ into $(f'\otimes f')(F\Delta'(x))F'\Delta''(y)$ which can be rewritten as $$(f'\otimes
f')(F\Delta'(x))F'\Delta''(y) = (f'\otimes f')(F)(f'\otimes f')(\Delta'(x))F'\Delta''(y) = $$ $$(f'\otimes
f')(F)F'\Delta''(f'(x))\Delta''(y) = (f'\otimes f')(F)F'\Delta''(f'(x)y).$$ Finally $(f'\otimes
f')(F)F'\Delta''(f'(x)y)$ is the coproduct $\Delta_{f(F)F'}$ applied to $f'(x)y$.

A gauge transformation $a\in H'$ of twisted homomorphisms $(f,F),(f',F'):(H,\Delta,\varepsilon)\to
(H',\Delta',\varepsilon')$ defines a homomorphism $x\mapsto ax$ of Galois $H'$-module coalgebras. Preservation of
$H'$-module structure is obvious. Compatible $H$-module structure is preserved by the property (\ref{comgt}) of gauge
transformations, compatibility with coalgebra structures follows from the property (\ref{mongt}): $$F\Delta'(ax) =
F\Delta'(a)\Delta'(x) = (a\otimes a)F'\Delta(x).$$

Define a bicategory $\Gal$ with objects being Hopf algebras, arrows from $H$ to $H'$ being Galois right $H'$-module
coalgebras with compatible left $H$-action and composition given by tensor product, and homomorphisms of
$H-H'$-bimodule coalgebras as 2-cells. The above construction defines a psedofunctor $\Tw\to\Gal$.

In the case of finite dimensional Hopf algebras we can replace Galois coalgebra with the more familiar notion of Galois
algebra. Note that for a finite dimensional Hopf algebra $H$, a Galois $H$-module coalgebra must be finite dimensional
(of the same dimension). Note also that the dual algebra to a Galois $H$-coalgebra will be a Galois $H$-algebra. Recall
that an {\em $H$-algebra} $R$ is an algebra and an $H$-module in such way that the multiplication map $\mu$ is a map of
$H$-modules: $$\mu\Delta(x)(a\otimes b) = x\mu(a\otimes b) = x(ab),\quad x\in H, a,b\in R.$$ The {\em crossed product}
$R*H$ of $H$ with an $H$-algebra $R$ is an algebra, which as a vector space is isomorphic to the tensor product
$R\otimes H$. Denote elements of $R*H$ corresponding to tensors $a\otimes x$ by $a*x$. The product on $R*H$ is given by
the rule: $$(a*x)(b*y) = \sum_{(x)}ax_1*x_2y.$$ Here we use the so-called Swidler's notation for the coproduct
$\Delta(x) = \sum_{(x)}x_1\otimes x_2$. Following \cite{mon} we call an $H$-algebra $R$ {\em Galois} if the
homomorphism of algebras $$A*H\to End(A),\quad a*x\mapsto (b\mapsto ax(a))$$ is an isomorphism.

For a finite dimensional $H$ the dual of the Galois coalgebra $(H,\Delta_H)$ is the dual space $H^*$ with the
multiplication: $$(l*_Fl')(x) = (l\otimes l')\Delta_F(x) = (l\otimes l')(F\Delta(x)).$$ This can be seen as an ordinary
multiplication on $H^*$ twisted by $F$ with respect to the (right) $H$-action on $H^*$: $$l*_Fl' = \sum
l^F_1(l')^F_2.$$ Here $F=\sum F_1\otimes F_2$ and $l^y(x) = l(xy)$.

\section{Categorical interpretations}\label{catint}
Most of the material included in this section is pretty standard (for example, see \cite{es}; for more categorically oriented treatment see \cite{st}). 
\subsection{Monoidal categories and functors}

A {\em monoidal category} is a category $\cG$  with a functor
\begin{displaymath}
\otimes :{\cal G} \times {\cal G} \longrightarrow {\cal G} \qquad (X,Y) \mapsto X \otimes Y
\end{displaymath}
({\em tensor product}), a natural collection of isomorphisms ({\em associativity constraint})
\begin{displaymath}
\varphi_{X,Y,Z} : X \otimes (Y \otimes Z) \rightarrow (X \otimes Y) \otimes Z \qquad \mbox{for any} \quad X,Y,Z \in
{\cal G}
\end{displaymath}
which satisfies the following {\em pentagon axiom}: $$(X\otimes\varphi_{Y,Z,W})\varphi_{X,Y\otimes
Z,W}(\varphi_{X,Y,Z}\otimes W) = \varphi_{X,Y,Z\otimes W}\varphi_{X\otimes Y,Z,W}$$ and an object $1$ ({\em unit}
object) together with natural isomorphisms $$\rho_{X}:X \otimes 1 \rightarrow X \quad \lambda_{X}:1 \otimes X
\rightarrow X$$ such that $\lambda_1 = \rho_1$ $$\lambda_{X\otimes Y} = \lambda_{X}\otimes I:1\otimes X\otimes Y\to
X\otimes Y,\quad \rho_X\otimes I = I\otimes\lambda_Y:X\otimes 1\otimes Y\to X\otimes Y,$$ $$\rho_{X\otimes Y} =
I\otimes\rho_{Y}:X\otimes Y\otimes 1\to X\otimes Y$$ for any $X,Y\in\cal G$. Here $I$ denote the identity morphism. It
is known that the first two conditions follow from the rest. We formulate them for the sake of symmetry.

The celebrated MacLane coherence theorem \cite{mc} says that there is a unique isomorphism between any two bracket
arrangements on the tensor products of objects $X_{1},...,X_{n}$, which is a composition of (tensor products of) the
associativity constraints. This fact allows to omit brackets in tensor products. It is also easy to see that the unit
object is unique up to an isomorphism. More precisely, any monoidal category is monoidally equivalent to a {\em strict}
monoidal category (a monoidal category with identity associativity and unit object constraints) (see, for example,
\cite{js}).

A {\em monoidal functor} between monoidal categories ${\cal G}$ and ${\cal H}$ is a functor $F : {\cal G} \to {\cal H}$
with a natural collection of isomorphisms (the so-called {\em monoidal structure})
\begin{displaymath}
F_{X,Y} : F(X \otimes Y) \rightarrow F(X) \otimes F(Y) \qquad \mbox{for any} \quad X,Y \in {\cal G}
\end{displaymath}
satisfying the {\em coherence} axiom:
\begin{equation}\label{cohmf}
(I\otimes F_{Y,Z})F_{X,Y\otimes Z} = (F_{X,Y}\otimes I)F_{X\otimes Y,Z}
\end{equation}
for any objects $X,Y,Z \in {\cal G}$.

A natural transformation $f : F \rightarrow G$ of monoidal functors $F$ and $G$ is {\em monoidal} if $$G_{X,Y}f_{X
\otimes Y} = (f_{X} \otimes f_{Y})F_{X,Y}$$ for any $X,Y \in {\cal G}$.

Here we will be mostly interested in tensor categories and functors (linear over the ground field $k$). Recall that a
monoidal category is {\em tensor} if it is $k$-linear abelian and the tensor product is $k$-linear and bi-exact. For a
monoidal functor between tensor categories to be {\em tensor} we will ask it to be $k$-linear and left exact. Denote by
$\Tens$ the 2-category of tensor categories, with tensor functors and monoidal natural transformations.

\subsection{Categories of modules over bialgebras}

Recall that the comultiplication in $H$ can be used to define a structure of an $H$-module on the tensor product
$M\otimes_{k}N$ of two $H$-modules: $$h*(m\otimes n) = \Delta(h)(m\otimes n)\qquad h\in H, m\in M, n\in N.$$ The
coassociativity axiom for the coproduct implies that the obvious associativity constraint for vector spaces $$\varphi
:L\otimes (M\otimes N)\rightarrow (L\otimes M)\otimes N \qquad \varphi (l\otimes (m\otimes n)) = (l\otimes m)\otimes
n$$ is $H$-linear. The counit defines an $H$-module structure on the ground field $k$ and the counit axiom guarantees
that this is a unit object. Thus the category $H\-Mod$ of (left) modules over a bialgebra becomes monoidal.

For a homomorphism of algebras $f:H\to H'$ define by $f^*:H'\-Mod\to H\-Mod$ the {\em inverse image} functor, which
turns an $H'$-module $M$ into an $H$-module $f^*(M)$. Here as a vector space, $f^*(M)$ is the same as $M$ but with a
new module structure $x.m = f(x)m$ for $x\in H$ and $m\in M$. On the level of categories of modules, twisted
homomorphisms and gauge transformations take the following meaning.
\begin{prop}\label{catme}
For a twisted homomorphism $(f,F):H\to H'$ the inverse image functor $f^*:H'-Mod\to H-Mod$ becomes tensor, with the
monoidal structure given by multiplication with the twist: $$F_{M,N}:f^*(M\otimes H)\to f^*(M)\otimes f^*(N),\quad
m\otimes n\mapsto F(m\otimes n).$$ Compositions of twisted homomorphisms and corresponding functors are related as
follows: $$((f,F)\circ(g,G))^* = (g,G)^*\circ(f,F)^*.$$ A gauge transformation $a:(f,F)\to (g,G)$ defines a monoidal
natural transformation $a:(f,F)^*\to (g,G)^*$: $$a_M:f^*(M)\to g^*(M),\quad m\mapsto am.$$ Compositions of gauge
transformations correspond to compositions of natural transformations.
\end{prop}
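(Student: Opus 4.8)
The plan is to verify each of the four assertions in Proposition~\ref{catme} by direct checking against the defining conditions of a twisted homomorphism, since the statement is really a dictionary between the algebraic data $(f,F)$, $a$ and the categorical data of tensor functors and monoidal natural transformations.

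First I would show that $F_{M,N}$ is a well-defined isomorphism of $H$-modules and satisfies the coherence axiom (\ref{cohmf}). Invertibility is immediate since $F$ is an invertible element of $H'\otimes H'$. The key point is $H$-linearity of the map $m\otimes n\mapsto F(m\otimes n)$: on the source $f^*(M\otimes N)$ the element $x\in H$ acts by $(f\otimes f)\Delta(x)$, while on the target $f^*(M)\otimes f^*(N)$ it acts by $\Delta'(f(x))$, so $H$-linearity is exactly the conjugation relation $F\Delta'(f(x))=(f\otimes f)\Delta(x)F$, i.e.\ condition (\ref{conj}). The coherence axiom (\ref{cohmf}) for the collection $F_{M,N}$ translates, after stripping off the modules, into the identity $(1\otimes F)(I\otimes\Delta')(F)=(F\otimes 1)(\Delta'\otimes I)(F)$ acting on $M\otimes N\otimes L$, which is precisely the 2-cocycle condition on $F$. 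The compatibility with the unit object follows from the normalisation condition $(\varepsilon\otimes I)(F)=(I\otimes\varepsilon)(F)=1$, which guarantees that $F_{M,N}$ reduces to the identity when one of the modules is the unit.

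Next I would treat the composition law for functors. Unwinding $((f',F')\circ(f,F))^*=(f'f,f'(F)F')^*$, the underlying functor is $(f'f)^*=f^*\circ(f')^*$ (the contravariance giving the reversed order displayed in the statement), so the only content is that the monoidal structure of the composite twisted homomorphism agrees with the composite of the two monoidal structures. On $M\otimes N$ the composite monoidal structure is obtained by first applying $F'_{M,N}$, i.e.\ multiplication by $F'$, and then applying $f^*$ of $F_{M,N}$, i.e.\ multiplication by $(f'\otimes f')(F)=f'(F)$; the product of these is multiplication by $f'(F)F'$, which is exactly the twist of the composite. Thus the identity of monoidal functors reduces to the already-established formula $f'(F)F'$ for the composite twist.

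Finally, for the gauge transformation $a:(f,F)\to(f',F')$, I would check that $a_M:m\mapsto am$ is a natural transformation $f^*\to (f')^*$ that is monoidal. Naturality in $M$ is automatic since $a$ acts by left multiplication and module maps are right-equivariant in the appropriate sense; the non-trivial point is that each $a_M$ is $H$-linear as a map $f^*(M)\to (f')^*(M)$, which says $a\,f(x)\,m=f'(x)\,a\,m$, i.e.\ the compatibility condition (\ref{comgt}). The monoidal condition $G_{M,N}\,a_{M\otimes N}=(a_M\otimes a_N)F_{M,N}$ becomes, after evaluating on $m\otimes n$, the identity $F'\Delta'(a)=(a\otimes a)F$ acting on $M\otimes N$, which is condition (\ref{mongt}). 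Compatibility with composition of gauge transformations follows because composing $a_M$ and $b_M$ gives multiplication by $ba$, matching the composition $b.a=ba$ defined earlier. I expect no serious obstacle here: the whole proof is a sequence of translations, and the main thing to be careful about is bookkeeping the side on which each algebra acts and keeping track of the order reversal under the contravariant inverse-image functor, so that the cocycle, conjugation, and normalisation conditions land on the correct categorical axioms.
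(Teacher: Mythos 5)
Your proposal is correct and takes essentially the same approach as the paper's own proof, which is just a terser statement of the same dictionary: condition (\ref{conj}) gives $H$-linearity of $F_{M,N}$, the 2-cocycle condition gives the coherence axiom (\ref{cohmf}), and conditions (\ref{comgt}) and (\ref{mongt}) give $H$-linearity and monoidality of $a_M$, with your verifications of the composition law, naturality, and the unit/normalisation compatibility merely filling in details the paper leaves implicit. One caveat: the monoidality identity you correctly derive, $F'\Delta'(a)=(a\otimes a)F$, is condition (\ref{mongt}) with $F$ and $F'$ interchanged, but this mismatch is a convention inconsistency internal to the paper itself (its later formula $F'(a\otimes a)=\Delta(a)F$ for gauge transformations of twists likewise disagrees with (\ref{mongt})), so it reflects no gap in your argument.
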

\begin{proof}
It is straightforward to see that the condition (\ref{conj}) guarantees $H$-linearity of the monoidal constraint
$F_{M,N}$ while the 2-cocycle condition for $F$ is equivalent to the coherence axiom (\ref{cohmf}). Similarly, the
condition (\ref{comgt}) for a gauge transformation $a$ says that $a_M$ is a morphism of $H$-modules and the condition
(\ref{mongt}) is equivalent to the monoidality of $a_M$.
\end{proof}

2-categorically the proposition \ref{catme} says that the inverse image functor construction defines a 2-functor
$\Tw\to \Tens$ contravariant on morphisms and covariant on 2-cells. We can extend this to a bifunctor $\Gal\to\Tens$ as
follows.
\begin{prop}
A Galois $H'$-module coalgebra $C$ with compatible $H$-action defines a tensor functor $C\otimes_{H'}\-:H'\-Mod\to
H\-Mod$ with the monoidal structure
 $$\xymatrix{C\otimes_{H'}(M\otimes N) \ar[d]^{\delta\otimes I}  & C\otimes_{H'}M\otimes C\otimes_{H'}N \\
(C\otimes C)\otimes_{H'}(M\otimes N) \ar[r] & (C\otimes C)\otimes_{H'\otimes H'}(M\otimes N) \ar[u]^{t_{23}} }$$
Homomorphisms of $H$-$H'$-bimodule coalgebras define monoidal natural transformations.
\end{prop}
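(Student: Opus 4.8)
The plan is to check, in turn, that $C\otimes_{H'}-$ is a well-defined functor into the category of $H$-modules, that the displayed composite is a natural isomorphism obeying the coherence axiom (\ref{cohmf}), and finally that morphisms of bimodule coalgebras induce monoidal transformations. The one substantial point is the invertibility of the monoidal constraint, and this is precisely where the Galois hypothesis is used.

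Since the left $H$-action on $C$ commutes with the right $H'$-action, each $C\otimes_{H'}M$ inherits a left $H$-module structure, and an $H'$-linear map $M\to M'$ induces an $H$-linear map $C\otimes_{H'}M\to C\otimes_{H'}M'$; this gives the functor. The constraint read off the diagram is
$$F_{M,N}\bigl(c\otimes_{H'}(m\otimes n)\bigr)=\sum (c_{(1)}\otimes_{H'}m)\otimes(c_{(2)}\otimes_{H'}n),$$
obtained by applying $\delta$ to the $C$-tensorand, passing to the quotient balanced over $H'\otimes H'$ in place of the diagonal $H'$, and rearranging by $t_{23}$. I would first verify that $F_{M,N}$ is well defined and $H$-linear: balancing over $H'$ uses the module-coalgebra identity $\delta(ch)=\delta(c)\Delta'(h)$, paralleling the computations of subsection \ref{galtw}, while $H$-linearity uses compatibility of the left $H$-action with $\delta$. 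Naturality in $M$ and $N$ is immediate from the formula, and the coherence axiom (\ref{cohmf}) reduces to coassociativity $(\delta\otimes I)\delta=(I\otimes\delta)\delta$, just as the $2$-cocycle condition did in Proposition \ref{catme}.

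The main step is to show $F_{M,N}$ is an isomorphism. The map $t_{23}$ and the canonical identification $(C\otimes C)\otimes_{H'\otimes H'}(M\otimes N)\cong(C\otimes_{H'}M)\otimes(C\otimes_{H'}N)$ are always bijective, so it suffices to treat the first leg $C\otimes_{H'}(M\otimes N)\to(C\otimes C)\otimes_{H'\otimes H'}(M\otimes N)$. As functors of $M$ (for fixed $N$) and of $N$ (for fixed $M$), both source and target are right exact and commute with direct sums, so choosing free presentations of $M$ and then of $N$ reduces the claim, by the short five lemma, to the single case $M=N=H'$. There $F_{H',H'}$ becomes, under $C\otimes_{H'}H'\cong C$, the map $C\otimes_{H'}(H'\otimes H')\to C\otimes C$ sending $c\otimes_{H'}(h\otimes k)$ to $\sum c_{(1)}h\otimes c_{(2)}k$. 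Using the standard left $H'$-module isomorphism carrying the diagonal action on $H'\otimes H'$ to the action on the first factor, this is identified with the Galois map $\beta:C\otimes H'\to C\otimes C$, $\beta(c\otimes h)=\sum c_{(1)}\otimes c_{(2)}h$. Hence $F_{H',H'}$ is invertible precisely because $C$ is Galois, its inverse being built from $\beta^{-1}$; this identification is the crux of the argument and the place I expect to spend the most care. Finally, $C\otimes_{H'}-$ is right exact, and the left exactness required of a tensor functor corresponds to flatness of $C$ as a right $H'$-module, which holds for Galois module coalgebras.

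For the last assertion, a homomorphism $\phi:C\to C'$ of $H$-$H'$-bimodule coalgebras induces $\phi\otimes_{H'}I:C\otimes_{H'}M\to C'\otimes_{H'}M$, which is $H$-linear and natural in $M$. Writing $F$ and $G$ for the constraints attached to $C$ and $C'$, the monoidality condition $G_{M,N}\,\phi_{M\otimes N}=(\phi_M\otimes\phi_N)\,F_{M,N}$ unwinds, via the formula above, to the equality $\sum\phi(c)_{(1)}\otimes\phi(c)_{(2)}=\sum\phi(c_{(1)})\otimes\phi(c_{(2)})$, which is exactly the statement that $\phi$ commutes with the coproducts $\delta$ and $\delta'$. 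Thus every bimodule-coalgebra map is a monoidal natural transformation, and compatibility with the two kinds of composition is then routine.
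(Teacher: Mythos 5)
Your proof is correct and takes essentially the same route as the paper's, whose entire argument is the two assertions that the Galois property gives invertibility of the constraint and coassociativity of $\delta$ gives coherence; your reduction via right-exactness, direct sums and free presentations to the case $M=N=H'$, where the constraint is identified with the Galois map $\beta$ after untwisting the diagonal $H'$-action on $H'\otimes H'$ by the antipode, is precisely the detail the paper leaves implicit. The one step you assert without argument --- flatness of $C$ as a right $H'$-module, needed for the left exactness in the paper's definition of tensor functor --- is likewise passed over in silence by the paper, so no discrepancy arises there.
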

\begin{proof}
It follows from the Galois property that the above monoidal structure is an isomorphism. Coassociativity of the
coproduct implies the coherence axiom.
\end{proof}
Obviously, composition of functors corresponds to the tensor product of coalgebras.

\section{Twisted automorphisms of a Hopf algebra}
In this section we will describe Cat-groups of twisted automorphisms along with some of their natural Cat-subgroups.
One of the reasons (which will be particularly important in the last section) why Cat-groups are more natural objects
to deal with rather than groups of (classes of) twisted automorphisms is the following. Suppose that we want to define
an action of a group $G$ on a bialgebra $H$ by twisted homomorphisms. A homomorphism from the group $G$ into the group
$Out_\Tw(H)$ of classes of twisted homomorphism would just not have enough information. A homomorphism from $G$ to the
group $Aut_\Tw(H)$ of twisted automorphisms would do, but that would not capture all the cases. The right answer is a
(monoidal) functor $G\to\Aut_\Tw(H)$ of Cat-groups (a map of crossed modules of groups).

\subsection{Categorical groups, crossed modules and their maps}\label{catgroups}

Recall that a {\em categorical group} is a monoidal category in which every arrow is invertible (monoidal groupoid) and
for every object $X$ there is an object $X^*$ with an arrow $e_X:X^*\otimes X\to I$ (a dual object). A categorical
group is {\em strict} (or a {\em Cat-group}) if it is strict as a monoidal category and $e_X$ can be chosen to be an
identity. In other words a Cat-group is a categorical group whose objects form a group (with the tensor product). A
slight modification of Mac Lane's coherence theorem \cite{mc} says that any categorical group is monoidally equivalent
to a strict one. Note that Cat-groups are group objects in the category of categories (thus the name).

For any object $A$ of a 2-category $\bA$ the category $\Aut_\bA(A)$ of automorphisms of $A$ (invertible 1-morphisms
$A\to A$), with the composition as the tensor product and bijective natural transformations as morphisms, is a
Cat-group.

It is well known (see \cite{bs} for the history) that Cat-groups are the same as crossed modules of Whitehead
(\cite{wh}). Recall that a {\em crossed module} of groups is a pair of groups $P,C$ with a (left) action of $P$ on $C$
(by group automorphisms): $$P\times C\to C,\quad (p,c)\mapsto ^{p}{c}$$ and a homomorphism of groups $$P
\stackrel{\partial}{\leftarrow} C$$ such that $$\partial(^{p}{c}) = p\partial(c)p^{-1},$$ $$^{\partial(c)}{c'} =
cc'c^{-1}.$$ For a Cat-group $\cG$ the corresponding crossed complex consists of the group of objects $P$, the group of
morphisms $X\to I$ into the identity object with the product: $$(X\stackrel{a}{\to} I).(Y\stackrel{b}{\to} I) =
X\otimes Y \stackrel{a\otimes b}{\longrightarrow} I\otimes I = I,$$ the action $$^{Y}(X\stackrel{a}{\to} I) = Y\otimes
X\otimes Y^* \stackrel{Y\otimes a\otimes Y^*}{\longrightarrow} Y\otimes I\otimes Y^* = I,$$ and the homomorphism
$\partial:C\to P$ sending $X\to I$ into $X$.

Relative simplicity and compactness of the notion of crossed module as a description for Cat-groups has its downside. A
monoidal functor between Cat-groups will correspond to something less obvious (and less well-known) than a {\em
homomorphism} of crossed modules (a pair of group homomorphisms preserving all the structures). That will describe only
{\em strict} monoidal functors. To deal with general monoidal functors we need the following weaker relation. A {\em
map} of crossed modules $(P,C)\to (E,N)$ is a triple $(\tau,\nu,\theta)$ where $\tau$ and $\nu$ are maps making the
diagram commutative $$\xymatrix{P \ar[d]^\tau & C\ar[l]^{\partial} \ar[d]^\nu \\ E & N \ar[l]^{\partial} }$$ and
$\theta:P\times P\to N$ such that $$\tau(pq) = \partial(\theta(p,q))\tau(p)\tau(q),\quad p,q\in P,$$ $$\nu(ab) =
\theta(\partial(a),\partial(b))\nu(a)\nu(b),\quad a,b\in C,$$ $$\theta(p,qr) ^{\tau(p)}{\theta(q,r)} =
\theta(pq,r)\theta(p,q),\quad p,q,r\in P,$$ $$\theta(1,q) = 1 = \theta(p,1),\quad p,q\in P,$$ $$\nu( ^{p}{a}) =
\theta(p,\partial(a)) ^{\tau(p)}{\nu(a)}.$$

Complete invariants of a categorical-group $\cG$ with respect to monoidal equivalences are $$\pi_0(\cG),\ \pi_1(\cG),\
\phi\in H^3(\pi_0(\cG),\pi_1(\cG)),$$ where the first is the group of isomorphism classes of objects, the second is the
abelian group ($\pi_0(\cG)$-module) $Aut_\cG(I)$ of automorphisms of the unit object and the third is a cohomology
class (the {\em associator}). In the crossed module setting $$\pi_0 = coker(\partial),\ \pi_1 = ker(\partial).$$ Note
that the image of $\partial$ is normal so the cokernal has sense. The class $\phi$ is defined as follows: choose a
section $\sigma:coker(\partial)\to P$ and a map $a:coker(\partial)\times coker(\partial)\to C$ such that $$\sigma(fg) =
\partial(a(f,g))\sigma(f)\sigma(g),\quad  f,g\in coker(\partial).$$ Then for any $f,g,h\in coker(\partial)$ the
expression $$a(f,gh) ^{\sigma(f)}{a(g,h)}a(f,g)^{-1}a(fg,h)$$ is always in the kernel  of $\partial$ and is a group
3-cocycle of $coker(\partial)$ with coefficients in $ker(\partial)$. The cohomology class $\phi$ does not depend on the
choices made.

Analogously, complete invariants of a monoidal functor $F:\cG\to\cF$ between categorical groups with respect to
monoidal isomorphisms are $$\pi_0(F):\pi_0(\cG)\to\pi_0(\cF),\ \pi_1(F):\pi_1(\cG)\to\pi_1(\cF),\
\theta(F):\pi_0(\cG)\times\pi_0(\cG)\to \pi_1(\cF),$$ where the first is the homomorphism of groups, the second is the
homomorphism of $\pi_0(\cG)$-modules and the third is really a class in
$C^2(\pi_0(\cG),\pi_1(\cF))/B^2(\pi_0(\cG),\pi_1(\cF))$ such that $$d(\theta(F)) = \pi_1(F)_*(\phi(\cG)) -
\pi_0(F)^*(\phi(\cF))$$ Here $$\pi_0(F)^*:C^*(\pi_0(\cF),\pi_1(\cF))\to C^*(\pi_0(\cG),\pi_1(\cF)),$$
$$\pi_1(F)_*:C^*(\pi_0(\cG),\pi_1(\cG))\to C^*(\pi_0(\cG),\pi_1(\cF))$$ are the maps of cochain complexes induced by
the group homomorphisms $\pi_0(F)$, $\pi_1(F)$.

\subsection{Cat-groups of twisted automorphisms}
As a part of the 2-category $\Tw$ the invertible twisted endomorphisms of a Hopf algebra $H$ (twisted automorphisms)
and gauge transformations between them form a Cat-group $\Aut_{\Tw}(H)$. The corresponding crossed module of groups has
the form $$Aut_{\Tw}(H) \stackrel{\partial}{\leftarrow} H_\varepsilon^\cross.$$ Here $Aut_{\Tw}(H)$ is the group of
twisted automorphisms of $H$ with respect to the composition, $H_\varepsilon^\cross$ is the group of invertible
elements $x$ of $H$ such that $\varepsilon(x)=1$, and $\partial$ sends $x$ into the pair (an {\em inner} twisted
automorphism) $( ^{x}{(\ )},(x\otimes x)\Delta(x)^{-1})$ where the first component is the conjugation automorphism:
$$^{x}{(\ )}:H\to H,\quad ^{x}{(y)} = xyx^{-1}.$$ The action of $Aut_{\Tw}(H)$ on $H^\cross$ is given by the action of
the first component $(f,F)(y) = f(y)$.

There are two important Cat-subgroups in $\Aut_{\Tw}(H)$. The first is the full Cat-subgroup $\Aut^1_{\Tw}(H)$ of
twisted automorphisms with the identity as the first component. Its crossed module is $$Aut^1_{\Tw}(H)
\stackrel{\partial}{\leftarrow} (Z(H)_\varepsilon)^\cross.$$ Here $Aut^1_{\Tw}(H)$ is the group of {\em invariant
twists} on $H$ (invertible elements of $H\otimes H$ commuting with the image $\Delta(H)$ and satisfying the 2-cocycle
condition), $(Z(H)_\varepsilon)^\cross$ is the group of invertible elements of the centre of counit 1:
$\varepsilon(x)=1$ . Again $\partial$ assigns to $x$ the invariant twist $(x\otimes x)\Delta(x)^{-1}$. The action of
$Aut^1_{\Tw}(H)$ on $(Z(H)_\varepsilon)^\cross$ is trivial.

The second is the full Cat-subgroup $\Aut_{bialg}(H)$ of bialgebra automorphisms of $H$. Here the crossed module is
$$Aut_{bialg}(H) \stackrel{\partial}{\leftarrow} G(H),$$ where $Aut_{bialg}(H)$ is the group of automorphisms of $H$ as
a bialgebra, $G(H)=\{x\in H,\quad \Delta(x) = x\otimes x\}$ is the group of group-like elements of $H$ and $\partial$
sends $x$ into the conjugation automorphism. The action of $Aut_{bialg}(H)$ on $G(H)$ is obvious.

Note that the Cat-subgroup $\Aut^1_{\Tw}(H)$ is what might be called {\em normal}: the components of its crossed module
are normal subgroups in the components of the crossed complex for $Aut_{\Tw}(H)$ and the action of $Aut_{\Tw}(H)$ on
$H^\cross$ preserve the subgroup $Z(H)^\cross$.

The Cat-subgroup $\Aut_{bialg}(H)$ is not in general normal. In the next part we will characterise it as the stabiliser
of a certain action. Recall that an {\em action} of a Cat-group (monoidal category) $\cG$ on a category $\cA$ is a
monoidal functor $\cG\to \End(\cA)$ into the category of endofunctors on $\cA$. For an object $A\in\cA$ the {\em
stabiliser} $\St_\cG(A)$ is the category of pairs $(G,g)$, where $G$ is an object and $g:G(A)\to A$ is an isomorphism
in $\cA$. A morphism of pairs $(G,g)\to(F,f)$ is a morphism $x:G\to F$ in $\cG$ such that the diagram $$\xymatrix{ G(A)
\ar[rr]^{x_A} \ar[rd]_g & & F(A) \ar[ld]^f \\ & A & }$$ commute.

\subsection{Action on twists}\label{actw}
Here we examine the (categorical) action of the Cat-group $\Aut_{\Tw}(H)$ on the category of twisted homomorphisms
$\Tw(k,H)$ given by the composition.

Let us start with the category $\Tw(k,H)$. Any twisted homomorphism $k\to H$ must have the form $(\iota,F)$ where
$\iota:k\to H$ is the unit inclusion and $F\in H^{\otimes 2}$ is an invertible element satisfying the 2-cocycle
condition (a {\em twist}). A gauge transformation $(\iota,F)\to (\iota,F')$ is an invertible element $a\in H$ such that
$F' (a\otimes a)= \Delta(a)F$. The category $\Tw(k,H)$ has a marked object $(\iota,1)$.

On the level of objects the action $Aut_{\Tw}(H)\times \Tw(k,H)\to \Tw(k,H)$ has the form $$(f,F)\circ(\iota,F')\mapsto
(\iota,{F'}^{f}F).$$ An object of the stabiliser $\St_{Aut_{\Tw}(H)}(\iota,1)$ is a triple $(f,F,a)$, where $(f,F)$ is
an object of $Aut_{\Tw}(H)$ and $a\in H$ is an invertible element such that $F = (a\otimes a)\Delta(a)^{-1}$ (a
transformation $(f,F)\circ(\iota,1)\to(\iota,1)$). The element $a$ can be interpreted as a gauge transformation
$(f,F)\to ( ^{a}{(\ )}\circ f,1)$. In particular, the composition $^{a}{(\ )}\circ f$ of $f$ with conjugation with $a$
is an automorphism of bialgebras. The naturality of this construction implies the following result.
\begin{prop}
The inclusion $\Aut_{bialg}(H)\to \St_{Aut_{\Tw}(H)}(\iota,1)$ is an equivalence of categories.
\end{prop}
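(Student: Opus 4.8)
The plan is to produce an explicit quasi-inverse to the inclusion and then to read off both essential surjectivity and full faithfulness from the family of gauge transformations $a$ already attached to the objects of the stabiliser. Write $\iota\colon\Aut_{bialg}(H)\to\St_{Aut_{\Tw}(H)}(\iota,1)$ for the inclusion, so that $\iota(\phi)=(\phi,1,1)$. First I would define a functor $R$ in the opposite direction, on objects by $R(f,F,a)={}^{a}(\ )\circ f$; by the discussion preceding the statement this is a bialgebra automorphism, so $R$ is well defined on objects, and $R\iota=\mathrm{id}$ since ${}^{1}(\ )\circ\phi=\phi$. On a morphism $x\colon(f_1,F_1,a_1)\to(f_2,F_2,a_2)$ I would set $R(x)=a_2xa_1^{-1}$; the gauge condition $xf_1(y)=f_2(y)x$ of (\ref{comgt}) shows that this element conjugates ${}^{a_1}(\ )\circ f_1$ into ${}^{a_2}(\ )\circ f_2$, so it is the natural candidate for a morphism of $\Aut_{bialg}(H)$.

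The heart of the matter is the observation, recorded just above the statement, that $a$ is a gauge transformation $(f,F)\to({}^{a}(\ )\circ f,1)$, that is, an isomorphism $a\colon(f,F,a)\xrightarrow{\sim}({}^{a}(\ )\circ f,1,1)=\iota R(f,F,a)$ inside the stabiliser. These isomorphisms assemble into a natural transformation $\mathrm{id}\Rightarrow\iota R$, and this single family gives essential surjectivity at once. Crucially, it restricts to the identity on the image of $\iota$, since there $a=1$. Naturality at a morphism $x$ is exactly the defining stabiliser triangle $a_1=a_2x$: feeding this relation into the formula for $R(x)$ yields $R(x)=a_2xa_1^{-1}=1$, so the square $\iota R(x)\circ a_1=a_2\circ x$ reduces precisely to $a_1=a_2x$. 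Thus $R$ is functorial, each connected component of the stabiliser is collapsed by $R$ to a single automorphism, and the natural isomorphism $\mathrm{id}\cong\iota R$ is established.

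With $R\iota=\mathrm{id}$ together with a natural isomorphism $\mathrm{id}\cong\iota R$ that is the identity on the image of $\iota$, the standard argument now shows $\iota$ to be fully faithful: faithfulness is immediate from $R\iota=\mathrm{id}$, while for fullness a morphism $g\colon\iota\phi\to\iota\psi$ satisfies $\iota R(g)=g$ because the natural isomorphism is trivial on $\iota\phi$ and $\iota\psi$, whence $g$ lies in the image of $\iota$ on arrows; combined with essential surjectivity this gives the equivalence. I expect the main obstacle to be the bookkeeping that makes $R$ a genuine functor rather than a mere assignment: one must check that $R(x)=a_2xa_1^{-1}$ is an \emph{admissible} morphism of $\Aut_{bialg}(H)$ and that $R$ respects composition, using that the monoidality condition (\ref{mongt}) for the three gauge transformations $a_1$, $a_2$, $x$ is consistent. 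This is exactly the point at which the defining triangle of the stabiliser, and not merely the bare axioms (\ref{comgt}) and (\ref{mongt}) of a single gauge transformation, must be invoked.
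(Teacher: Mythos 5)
Your proposal is correct and follows essentially the same route as the paper: the paper's entire proof consists of the remark preceding the statement, that $a$ is a gauge transformation $(f,F)\to({}^{a}(\ )\circ f,1)$ together with the phrase ``the naturality of this construction implies the result,'' and your argument simply makes this explicit, taking $R(f,F,a)={}^{a}(\ )\circ f$ as quasi-inverse and the family of $a$'s as the natural isomorphism $\mathrm{id}\Rightarrow\iota R$. Your observation that the stabiliser triangle $a_1=a_2x$ forces $R(x)=a_2xa_1^{-1}=1$ is precisely the naturality the paper invokes, so the two proofs coincide in substance.
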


Now we describe the stabilizer $\St_{Aut^1_{\Tw}(H)}(\iota,1)$ in the Cat-subgroup of invariant twists. Its objects are
pairs $(F,a)$ where $F$ is an invariant twist on $H$ and $a\in H$ is an invertible element such that $F = (a\otimes
a)\Delta(a)^{-1}$ (a transformation $(I,F)\circ(\iota,1)\to(\iota,1)$). Similarly, the element $a$ can be seen as a
gauge transformation $(I,F)\to ( ^{a}{(\ )},1)$. In particular, the conjugation $^{a}{(\ )}$ is an automorphism of
bialgebras. Thus we have the following.
\begin{prop}
The stabiliser $\St_{Aut^1_{\Tw}(H)}(\iota,1)$ is equivalent to the Cat-group $\Aut^{inn}_{bialg}(H)$ with the crossed
module:
\begin{equation}\label{crbial}
Aut^{inn}_{bialg}(H) \stackrel{\partial}{\leftarrow} G(H),
\end{equation}
where $Aut^{inn}_{bialg}(H)$ is the group of bialgebra automorphisms of $H$ which are inner as algebra automorphisms,
i.e. the kernel of the homomorphism $Aut_{bialg}(H)\to Out_{alg}(H)$.
\end{prop}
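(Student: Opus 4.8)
The plan is to upgrade the object-level observations made just before the statement to a full monoidal equivalence, in exact parallel with the treatment of $\St_{Aut_{\Tw}(H)}(\iota,1)$ in the previous proposition. I would define a functor $\Phi$ from $\St_{Aut^1_{\Tw}(H)}(\iota,1)$ to $\Aut^{inn}_{bialg}(H)$ on objects by $(F,a)\mapsto {}^{a}{(\ )}$, reading $a$ as the gauge transformation $(I,F)\to({}^{a}{(\ )},1)$. Because $({}^{a}{(\ )},1)$ is a separated twisted automorphism with identity twist, its first component ${}^{a}{(\ )}$ is a bialgebra automorphism, and it is inner as an algebra automorphism by construction; hence $\Phi$ indeed lands in $\Aut^{inn}_{bialg}(H)$. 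On a $2$-cell $(F,a)\to(F',a')$, which is an element $x\in(Z(H)_\varepsilon)^\cross$ with the commuting triangle forcing $x=(a')^{-1}a$, the functor $\Phi$ sends it to the associated gauge transformation of bialgebra automorphisms.

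Well-definedness and essential surjectivity both hinge on one computation: for invertible $a$ with $\varepsilon(a)=1$, the coboundary $F=(a\otimes a)\Delta(a)^{-1}$ is an \emph{invariant} twist exactly when ${}^{a}{(\ )}$ is a coalgebra (hence bialgebra) automorphism. Conjugating $\Delta(y)$ by $F$ and using multiplicativity of $\Delta$ reduces the invariance $F\Delta(y)=\Delta(y)F$ to $\Delta(a^{-1}ya)=(a^{-1}\otimes a^{-1})\Delta(y)(a\otimes a)$, which is precisely the assertion that ${}^{a^{-1}}{(\ )}$ respects the coproduct. The $2$-cocycle identity for a coboundary is automatic, and normalisation is secured by rescaling $a\mapsto\varepsilon(a)^{-1}a$, which leaves ${}^{a}{(\ )}$ unchanged. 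Thus every inner bialgebra automorphism ${}^{a}{(\ )}$ is $\Phi$ of the object $((a\otimes a)\Delta(a)^{-1},a)$, giving essential surjectivity.

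It remains to verify that $\Phi$ is full and faithful and respects the Cat-group structure, and to read off the crossed module. Tensor compatibility is routine: composing the twisted automorphisms multiplies the twists and the trivialising elements, which matches composition of the corresponding conjugations. For the crossed module I would identify the group of objects, through $a\mapsto {}^{a}{(\ )}$, with $Aut^{inn}_{bialg}(H)$, the kernel of $Aut_{bialg}(H)\to Out_{alg}(H)$, and then obtain $\partial$ and the sequence (\ref{crbial}) by restricting the crossed module of $\Aut_{bialg}(H)$ along the full inclusion $\Aut^{inn}_{bialg}(H)\hookrightarrow\Aut_{bialg}(H)$; the group-like elements $G(H)$ reappear as the $2$-cells into the identity, with $\partial$ again conjugation.

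I expect the main obstacle to be the matching of $2$-cells, that is, the full-and-faithful clause together with the exact shape of (\ref{crbial}). Within the stabiliser the gauge transformations are the \emph{central} elements $(Z(H)_\varepsilon)^\cross$ inherited from $Aut^1_{\Tw}(H)$, whereas the crossed module of $\Aut^{inn}_{bialg}(H)$ records the \emph{group-like} elements $G(H)$ with $\partial$ conjugation; reconciling these two accounts of the automorphisms of the unit object and of the isomorphisms between objects, so that the invariants $\pi_0$, $\pi_1$ and the associator coincide, is where the real work concentrates. This is most transparent when $G(H)$ is trivial, as for a universal enveloping algebra $U(\g)$ --- the case relevant to the sequel --- and in general I would conclude by comparing directly the invariants extracted from the two crossed modules.
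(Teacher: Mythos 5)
Your object-level analysis is correct and is essentially the argument the paper itself gives (the paper supplies no separate proof; the paragraph preceding the statement, identifying objects of the stabiliser with pairs $(F,a)$, $F=(a\otimes a)\Delta(a)^{-1}$, and reading $a$ as a gauge transformation $(I,F)\to({}^{a}(\ ),1)$, is the whole proof). Your key computation --- that invariance of the coboundary $F=(a\otimes a)\Delta(a)^{-1}$ is equivalent to ${}^{a}(\ )$ being a coalgebra automorphism --- is right, as are the observations that the $2$-cocycle identity is automatic for coboundaries and that normalisation is arranged by rescaling $a$; this gives essential surjectivity in more detail than the paper records.

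The genuine gap is precisely the point you defer: the matching of $2$-cells, and your fallback plan cannot close it. With your reading of the triangle, $x=(a')^{-1}a$ is forced, so every hom-set of the stabiliser has at most one element; but in the Cat-group with crossed module (\ref{crbial}) the set $\mathrm{Hom}({}^{a}(\ ),{}^{a}(\ ))$ is a torsor over the central group-likes $G(H)\cap Z(H)$, which is nontrivial already for the group algebra of an abelian group. Under that reading $\Phi$ is simply not full, the two $\pi_1$'s disagree, and ``comparing the invariants extracted from the two crossed modules'' could not succeed even in principle (nor would equality of invariants produce the monoidal equivalence). Also, the case relevant to the sequel does \emph{not} have trivial $G(H)$: the group-likes of $U(\g)[[h]]$ are $exp(h\g[[h]])$. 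The missing idea is the transport of a $2$-cell along the two trivialisations: a gauge $2$-cell $x\in(Z(H)_\varepsilon)^\cross$ between the underlying invariant twists, i.e. $F\Delta(x)=(x\otimes x)F'$, must be sent to $h=a^{-1}xa'$. Substituting $F=(a\otimes a)\Delta(a)^{-1}$ and $F'=(a'\otimes a')\Delta(a')^{-1}$ into the gauge condition gives literally $\Delta(a^{-1}xa')=(a^{-1}xa')\otimes(a^{-1}xa')$, so $h\in G(H)$; centrality of $x$ gives ${}^{a}(\ )\circ{}^{h}(\ )={}^{a'}(\ )$; and $h\mapsto aha'^{-1}$ is the inverse assignment (centrality of $aha'^{-1}$ being exactly the intertwining condition on $h$). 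This bijection is what identifies the $2$-cells of the stabiliser with $G(H)$, produces $\partial$ as conjugation, and hence yields the crossed module (\ref{crbial}); it also shows that the stabiliser $2$-cells must be taken as arbitrary gauge $2$-cells between the underlying objects, with the trivialisations transported, rather than rigidified by the strict triangle --- under the strict reading both this proposition and the preceding one about $\St_{Aut_{\Tw}(H)}(\iota,1)$ would fail for any $H$ with $G(H)\neq 1$.
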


In particular, $\pi_0(\St_{Aut^1_{\Tw}(H)}(\iota,1))$ is isomorphic to the kernel of $Out_{bialg}(H)\to Out_{alg}(H)$.

We finish this section with a description of the orbits of the Cat-group action of $\Aut_{\Tw}(H)$ on $\Tw(k,H)$ in
terms of twisted forms of the bialgebra $H$. Recall that a twist $F\in H^{\otimes 2}$ allows us to define a new
coproduct on $H$: $$\Delta^F(x) = F^{-1}\Delta(x)F.$$ We call the bialgebra $Tf(F) = (H,\Delta^F)$ an {\em $F$-twisted
form} of $H$ (or just a {\em twisted form}). Note that a gauge transformation of twists $F' (a\otimes a)= \Delta(a)F$
defines a homomorphism of bialgebras $(\ )^a:(H,\Delta^{F'})\to (H,\Delta^F)$. Indeed, $$\Delta^F(a^{-1}xa) =
F^{-1}\Delta(a)^{-1}\Delta(x)\Delta(a)F =$$ $$(a\otimes a)^{-1}(F')^{-1}\Delta(x)F'(a\otimes a) = (a\otimes
a)^{-1}\Delta^{F'}(x)(a\otimes a).$$ Thus we have a functor $Tf:\Tw(k,H)\to Bialg$ into the category $Bialg$ of
bialgebras and their automorphisms.
\begin{prop}
The functor $Tf:\Tw(k,H)\to Bialg$ is strictly constant on orbits of the action of $\Aut_{\Tw}(H)$ on $\Tw(k,H)$, i.e.
isomorphisms of twisted forms $f:(H,\Delta^F)\to (H,\Delta^{F'})$ are in 1-1 correspondence with twisted automorphisms
$(f,F''):(H,\Delta)\to (H,\Delta)$ such that $(f,F'')\circ(\iota,F) = (\iota,F')$.
\end{prop}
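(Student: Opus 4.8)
The plan is to reduce the claimed 1-1 correspondence to a set-theoretic statement and then check a short list of twist axioms. First I would unravel the orbit condition. By the composition rule and the description of the action in Section~\ref{actw}, the composite $(f,F'')\circ(\iota,F)$ has first component $\iota$ and twist $(f\otimes f)(F)\,F''$; demanding that this equal $(\iota,F')$ forces
$$F'' = (f\otimes f)(F)^{-1}F'.$$
Thus the twist of an orbit-morphism is completely determined by its first component $f$, so the assignment $(f,F'')\mapsto f$ is injective, with one-sided inverse $f\mapsto(f,(f\otimes f)(F)^{-1}F')$. The whole statement then amounts to identifying the image of $(f,F'')\mapsto f$ with the set of bialgebra isomorphisms $(H,\Delta^F)\to(H,\Delta^{F'})$, after which mutual inverseness is automatic because $F''$ is forced in both directions.

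Next I would match the intertwining conditions. Substituting $F''=(f\otimes f)(F)^{-1}F'$ into condition~(\ref{conj}) for $(f,F'')$ and conjugating, a direct manipulation shows that (\ref{conj}) is equivalent to $f$ carrying the twisted comultiplication $\Delta^F$ to $\Delta^{F'}$, i.e.\ to $f$ being a coalgebra morphism $(H,\Delta^F)\to(H,\Delta^{F'})$. Since $f$ is already an algebra automorphism, and a short computation with (\ref{conj}) and the normalisation conditions gives $\varepsilon f=\varepsilon$ (both twisted forms carrying the counit $\varepsilon$), this is precisely the assertion that $f$ is a bialgebra isomorphism of twisted forms. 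This single computation disposes of the forward direction (a twisted automorphism in the orbit yields a bialgebra isomorphism $f$) and of half of the backward direction.

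For the backward direction it remains, given a bialgebra isomorphism $f\colon(H,\Delta^F)\to(H,\Delta^{F'})$, to verify that $F''=(f\otimes f)(F)^{-1}F'$ satisfies the remaining twist axioms. Normalisation is immediate: $\varepsilon\otimes I$ and $I\otimes\varepsilon$ are algebra homomorphisms $H\otimes H\to H$, so $\varepsilon f=\varepsilon$ together with the normalisation of $F$ and $F'$ gives $(\varepsilon\otimes I)(F'')=(I\otimes\varepsilon)(F'')=1$. The 2-cocycle condition is the crux, and here I would avoid a brute-force $H^{\otimes 3}$ calculation in favour of the groupoid behaviour of twists. Writing $G=(f\otimes f)(F)$, the inverse $F^{-1}$ is a twist for the $F$-twisted coproduct, and pushing it forward along the algebra automorphism $f$ shows that $G^{-1}=(f\otimes f)(F^{-1})$ is a twist for the $(f\otimes f)$-transport of that coproduct, which by the intertwining established above is exactly the $F'$-twisted coproduct. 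Since $F'$ is itself a twist for $\Delta$ while $G^{-1}$ is a twist for the $F'$-twisted coproduct, their product $F''$ is, in the appropriate order, again a twist for $\Delta$ by the standard composition of twists (the same fact that underlies the well-definedness of composition of twisted homomorphisms).

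I expect this 2-cocycle step to be the main obstacle, essentially because the 2-cocycle identity is strictly stronger than mere coassociativity of the twisted comultiplication: transport of structure through $f$ alone only shows that the relevant twisted coproduct is coassociative (so that $F''$ is a priori only a quasi-twist), and one must genuinely invoke that inverses, pushforwards along algebra isomorphisms, and products of twists are again twists for the correspondingly twisted comultiplications. Once this is in place, the two assignments are mutually inverse by construction, which yields the claimed 1-1 correspondence and hence the strict constancy of $Tf$ on orbits.
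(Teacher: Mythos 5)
Your plan is sound, and its central computation coincides with the paper's proof: in both directions everything reduces to substituting the forced twist $F''$ into condition (\ref{conj}) and conjugating, which is exactly the two-line manipulation the paper performs. Where you genuinely go beyond the paper is in the converse direction: the printed proof verifies \emph{only} condition (\ref{conj}) for $F''$ and says nothing about the 2-cocycle and normalisation axioms, so on precisely the point you single out as the crux the paper's proof is silent. Your structural argument fills this correctly: $F^{-1}$ is a twist for the $F$-twisted coproduct (invert the 2-cocycle identity for $F$), its pushforward along the bialgebra isomorphism $f$ of twisted forms is a twist for the $F'$-twisted coproduct, and multiplying by $F'$ lands back in twists for $\Delta$ by the same lemma that makes composition of twisted homomorphisms well defined. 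This trades a brute-force verification in $H^{\otimes 3}$ for three groupoid facts about twists, all implicit in the paper's composition framework; your normalisation and $\varepsilon f=\varepsilon$ remarks are also fine.

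One point you must settle before the deferred ``direct manipulation'' of your second paragraph can actually close: the orders. From the composition rule $(f',F')\circ(f,F)=(f'f,f'(F)F')$ the orbit condition is $(f\otimes f)(F)\,F''=F'$, as you derive; but the paper's proof works with $F''\,(f\otimes f)(F)=F'$, and correspondingly this section of the paper defines the twisted form by $\Delta^F(x)=F^{-1}\Delta(x)F$, the reverse of the twisted coproduct $F\Delta(x)F^{-1}$ of section 2 (its gauge condition $F'(a\otimes a)=\Delta(a)F$ is likewise reversed relative to (\ref{mongt})). The two reversals cancel, so the paper's subsection is internally consistent in the opposite convention; your formulas are consistent with section 2's conventions provided you read $\Delta^F$ as $F\Delta(\cdot)F^{-1}$: then, writing $G=(f\otimes f)(F)$, condition (\ref{conj}) gives $(f\otimes f)\Delta^F(x)=(GF'')\Delta(f(x))(GF'')^{-1}=\Delta^{F'}(f(x))$, since $GF''=F'$, and the claimed equivalence is immediate. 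If instead you keep your $F''=G^{-1}F'$ together with the literal $\Delta^F=F^{-1}\Delta F$ of the statement, the conjugating element comes out as $G^{-1}F''=G^{-2}F'$ rather than $(F')^{-1}$, and the equivalence with (\ref{conj}) fails. This is an inherited artifact of the paper's own left--right slippage, not a flaw in your strategy, but your proof does not compile until one convention is fixed throughout.
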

\begin{proof}
If $(f,F''):(H,\Delta)\to (H,\Delta)$ is a twisted automorphism such that $F''(f\otimes f)(F)= F'$ then $$(f\otimes
f)\Delta^F(x) = (f\otimes f)(F)^{-1}(f\otimes f)\Delta(x)(f\otimes f)(F) =$$ $$(F')^{-1}F''(f\otimes
f)\Delta(x)(F'')^{-1}F' = (F')^{-1}\Delta(f(x))F' = \Delta^{F'}(f(x))$$ so that $f$ is an isomorphism of bialgebras
$(H,\Delta^F)\to (H,\Delta^{F'})$. Conversely, for an isomorphism of bialgebras $f:(H,\Delta^F)\to (H,\Delta^{F'})$,
the element $F''= F'(f\otimes f)(F)^{-1}$ defines a structure of twisted isomorphism $(f,F''):(H,\Delta)\to
(H,\Delta)$: $$F''(f\otimes f)\Delta(x) = F'(f\otimes f)(F)^{-1}(f\otimes f)\Delta(x) =$$ $$F'(f\otimes
f)(F^{-1}\Delta(x)) = (F')^{-1}F'\Delta(f(x))F'(f\otimes f)(F)^{-1}.$$
\end{proof}

In particular, the orbits of the group action of $Aut_{\Tw}(H)$ on $Tw(k,H)$ are in 1-1 correspondence with isomorphism
classes of twisted forms of $H$.

\subsection{Action on triangular structures}
Here we refine the action of twisted automorphisms on triangular structures to the level of categories.

Recall that a {\em triangular structure} on a bialgebra $H$ is an invertible element $R\in H\otimes H$ (a {\em
universal R-matrix}) satisfying
\begin{equation}\label{conjr}
Rt\Delta (x) = \Delta (x)R \quad \forall x\in H,
\end{equation}
along with {\em triangle equations}: $$(I\otimes \Delta)(R) = R_{13}R_{12},$$ $$(\Delta\otimes I)(R) = R_{13}R_{23},$$
{\em normalisation}: $$(\varepsilon\otimes I)(R) = (I\otimes\varepsilon)(R) = 1,$$ and {\em unitarity} condition:
$$R_{21} = R^{-1}.$$ Here $R_{21} = (12)R$ is the transposition of tensor factors of $R\in H\otimes H$, $R_{12} =
R\otimes 1$, $R_{13} = (I\otimes (12))(R_{12})$ etc.

Note that any universal $R$-matrix on a cocommutative bialgebra $H$ satisfies the 2-cocycle condition: $$(R\otimes
1)(\Delta\otimes I)(R) = R_{12}R_{13}R_{23} = (I\otimes\Delta)(R)(1\otimes R).$$
\newline
Denote by $\Tr(H)$ the set of triangular structures on the bialgebra $H$.
\newline
It was observed by Drinfeld (see also \cite{ra}) that the map
\begin{equation}\label{suphom}
H^*\to H,\quad l\mapsto (l\otimes I)(R)
\end{equation}
is a homomorphism of algebras and anti-homomorphism of coalgebras for any triangular structure $R$. In particular, its
image is a (finite-dimensional) sub-bialgebra $H_R$ in $H$, $R$ belongs to $H_R^{\otimes 2}$ (the so-called {\em
minimal triangular} sub-bialgebra), and the map (\ref{suphom}) factors as follows $$H^*\to H^*_R\simeq H_R\to H,$$
where the first surjection $H^*\to H^*_R$ is dual to the last inclusion $H_R\to H$ and the isomorphism $H^*_R\simeq
H_R$ is self-dual. It is well known that the minimal triangular subalgebra for a cocommutative Hopf algebra possesses a
quite simple description.
\begin{prop}\label{minco}
For a cocommutative Hopf algebra $H$ the set $\Tr(H)$ of triangular structures is isomorphic to the set of pairs
$(A,b)$, where $A$ is a normal commutative cocommutative finite dimensional sub-bialgebra in $H$ and $b:A^*\to A$ is an
$H$-invariant isomorphism of bi-algebras.
\end{prop}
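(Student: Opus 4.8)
The plan is to exhibit two mutually inverse maps between $\Tr(H)$ and the set of pairs $(A,b)$. In one direction, given a triangular structure $R$ I would attach the minimal triangular sub-bialgebra $A=H_R$ together with the self-dual isomorphism $b:A^*\to A$ supplied by the factorisation $H^*\to A^*\simeq A\to H$ of the Drinfeld map (\ref{suphom}), which I denote $f_R$, already recalled above. In the other direction, given a pair $(A,b)$ I would set $R_b=(b\otimes I)(C_A)\in A\otimes A\subseteq H\otimes H$, where $C_A=\sum_i e^i\otimes e_i\in A^*\otimes A$ is the canonical element (available since $A$ is finite dimensional); thus $R_b$ is simply the image of $b$ under $\mathrm{Hom}(A^*,A)\cong A\otimes A$. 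I would then check that each assignment lands in the intended set and that the two are inverse to one another.

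First I would verify the four properties of $A=H_R$. Commutativity is the one place where cocommutativity of $H$ enters decisively: since $\Delta$ is cocommutative the dual algebra $H^*$ is commutative, and $A=\mathrm{im}(f_R)$ is the image of the algebra homomorphism (\ref{suphom}), hence commutative. Cocommutativity of $A$ is inherited as a subcoalgebra of the cocommutative $H$, and finite dimensionality is immediate because $\mathrm{im}(f_R)$ is spanned by the finitely many right legs of $R$. Normality, i.e. stability of $A$ under the adjoint action $\mathrm{ad}_x(a)=\sum_{(x)}x_1 a S(x_2)$, is the first genuinely technical point: I would deduce it from the invariance condition (\ref{conjr}), which for cocommutative $H$ reads $R\Delta(x)=\Delta(x)R$, by showing that $f_R$ intertwines the coadjoint action of $H$ on $H^*$ with the adjoint action on $H$. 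The computation amounts to rewriting $(I\otimes\mathrm{ad}_x)(R)$ using $R\Delta(x)=\Delta(x)R$ together with the triangle equation $(I\otimes\Delta)(R)=R_{13}R_{12}$.

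Next I would translate the remaining axioms on $R$ into conditions on $b$. The triangle equation $(\Delta\otimes I)(R)=R_{13}R_{23}$ says precisely that $f_R$, hence $b$, is an algebra homomorphism; the equation $(I\otimes\Delta)(R)=R_{13}R_{12}$ says $f_R$ is a coalgebra anti-homomorphism, which, as both $A$ and $A^*$ are commutative and cocommutative, coincides with being a coalgebra homomorphism; and the normalisation conditions correspond to $b$ preserving unit and counit. Together these make $b$ a bialgebra isomorphism, invertibility being equivalent to invertibility of $R_b$ with inverse $(S_A\otimes I)(R_b)$, where $S_A$ is the antipode of $A$. The $H$-invariance of $b$ is, under the same dictionary, the image of the invariance condition (\ref{conjr}) treated above. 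Finally the unitarity $R_{21}=R^{-1}$ corresponds to the self-duality of $b$ singled out in the factorisation, concretely the relation $b^*=b\circ S_{A^*}$; I would verify this using the standard identity $R^{-1}=(S\otimes I)(R)$ and the fact that $R_{21}$ corresponds to the transpose $b^*$.

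For the reverse construction I would read this dictionary backwards, checking that $R_b$ built from a pair $(A,b)$ with the listed properties is genuinely triangular, and that minimality holds, namely $H_{R_b}=A$ and $b_{R_b}=b$, which is forced precisely because $b$ is an isomorphism, so the right legs of $R_b$ span all of $A$. The main obstacle is twofold: deriving normality of $A$ cleanly from (\ref{conjr}), and the exact matching of unitarity with the self-duality of $b$. Both become transparent once one invokes that over an algebraically closed field of characteristic zero a finite-dimensional commutative cocommutative Hopf algebra $A$ is the group algebra $k[K]$ of a finite abelian group; under this description $b$ becomes a group isomorphism $\widehat{K}\to K$, $R_b$ becomes the associated bicharacter, and unitarity becomes its antisymmetry $\langle a,b\rangle\langle b,a\rangle=1$, reducing the remaining verifications to routine checks.
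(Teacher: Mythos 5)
Your proposal follows essentially the same route as the paper's (very terse) proof: both attach to $R$ the minimal triangular sub-bialgebra $H_R$ with the self-dual isomorphism $b$ from the factorisation of the Drinfeld map (\ref{suphom}), obtain cocommutativity and commutativity from $H$ and its dual, and identify normality of $A$ and $H$-invariance of $b$ with the condition (\ref{conjr}), with unitarity matching the self-duality of $b$. Your treatment is simply more detailed — the explicit inverse construction $R_b=(b\otimes I)(C_A)$, the axiom-by-axiom dictionary, and the reduction to group algebras $k[K]$ of finite abelian groups are all correct elaborations of what the paper leaves implicit.
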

\begin{proof}
As a sub-biagebra of $H$ the minimal triangular sub-bialgebra $H_R$ must be cocommutative. It is commutative since
$H_R^*$ is a sub-bialgebra of $H^*$. Normality of $H_R$ and $H$-invariance of $b:H^*_R\simeq H_R$ are equivalent to the
condition (\ref{conjr}) for $R$.
\end{proof}

For a twisted automorphism $(f,F)$ and a triangular structure $R$ on $H$ define a {\em twisted} triangular structure:
$${R}^{(f,F)}= F_{21}(f\otimes f)(R)F^{-1}.$$ It is straightforward to verify that the properties of the $R$-matrix are
preserved. Moreover, gauge isomorphic twisted automorphisms act equally. Indeed, for $g(x) = af(x)a^{-1}$ and $G =
\Delta(a)F(a\otimes a)^{-1}$, $${R}^{(g,G)} = \Delta(a)F_{21}(a\otimes a)^{-1}(a\otimes a)(f\otimes f)(R)(a\otimes
a)^{-1}(a\otimes a)F^{-1}t\Delta(a)^{-1} =$$ $$\Delta(a){R}^{(f,F)}t\Delta(a)^{-1} ={R}^{(f,F)}.$$ Thus an action of
the group $Out_{\Tw}(H)$ on the set $\Tr(H)$ is defined.

Recall that the {\em Drinfeld element} of a triangular structure $R$ on a Hopf algebra $H$ is $u = \mu(I\otimes S)(R)$,
where $S:H\to H$ is the antipode and $\mu:H\otimes H\to H$ is the multiplication map. It was proven in \cite{dr1} that
$u$ (also see \cite{lu}) is a group-like element: $\Delta(u) = u\otimes u$. Moreover, if $H$ is cocommutative then $u$
is central and of order 2. Thus we have a map $u:Tr(H)\to (G(H)\cap Z(H))_2$ from the set of triangular structures to
the 2-torsion subgroup of group-like central elements. This map admits a section, which sends a group-like involution
$u$ to an $R$-matrix $$R_u = \frac{1}{2}(1\otimes 1 + 1\otimes u + u\otimes 1 - u\otimes u).$$

\begin{prop}
For a cocommutative Hopf algebra  the Drinfeld element map is constant on orbits of the action of $Out^1_{\Tw}(H)$ on
$Tr(H)$.
\end{prop}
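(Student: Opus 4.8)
The plan is to reduce the statement to the single identity $u(F_{21}RF^{-1}) = u(R)$ for one invariant twist $F$, and then to compute the right‑hand side through the antipode. An element of $Out^1_{\Tw}(H)$ is the gauge class of an invariant twist $F$, acting by $R\mapsto R^{(I,F)} = F_{21}RF^{-1}$; since this action was already seen to be constant on gauge classes (gauge isomorphic twisted automorphisms act equally), it suffices to prove $u(F_{21}RF^{-1}) = u(R)$ for every invariant twist $F$. I abbreviate $\phi = \mu(I\otimes S):H\otimes H\to H$, so $u(R) = \phi(R)$. The pivotal observation is that, for cocommutative $H$, all four elements $R,F,F^{-1},F_{21}$ lie in the commutant $C$ of $\Delta(H)$: for $R$ this is condition (\ref{conjr}) read through $t\Delta = \Delta$, for $F$ it is the invariance built into an invariant twist, and $C$ is closed under products, inverses and the flip $t$.

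First I would record two properties of $\phi$ on $C$. (i) $\phi(C)\subseteq Z(H)$: for $G\in C$ and $x\in H$ one computes $\sum x_{(1)}\phi(G)S(x_{(2)}) = \phi(\Delta(x)G) = \phi(G\Delta(x)) = \varepsilon(x)\phi(G)$, and this ad‑invariance of $\phi(G)$ is exactly its centrality. (ii) $\phi$ is multiplicative on $C$: since $\phi(G')$ is central, $\phi(GG') = \sum G_1\phi(G')S(G_2) = \phi(G')\phi(G)$. Applying (ii) to $F_{21}RF^{-1}$ gives $u(F_{21}RF^{-1}) = \phi(F_{21})\,u\,\phi(F^{-1})$, with all three factors central. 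Now $\phi(FF^{-1}) = \phi(1\otimes 1) = 1$ forces $\phi(F^{-1}) = \phi(F)^{-1}$, while $S^2 = I$ (cocommutativity) gives $\phi(F_{21}) = S(\phi(F))$. Writing $w = \phi(F) = \mu(I\otimes S)(F)$, I obtain $u(F_{21}RF^{-1}) = S(w)\,u\,w^{-1} = u\cdot S(w)w^{-1}$, so the whole proposition collapses to the single claim $S(w) = w$.

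The heart of the matter is therefore to show that $w = \mu(I\otimes S)(F)$ is fixed by the antipode for an invariant twist $F$; equivalently that the central element $z = S(w)w^{-1} = \phi(F_{21}F^{-1})$ is trivial. This is precisely where the $2$‑cocycle and normalisation conditions (so far unused) must enter, and it is the step I expect to be delicate. The route I would take is to feed contractions of the cocycle identity $(F\otimes 1)(\Delta\otimes I)(F) = (1\otimes F)(I\otimes\Delta)(F)$ into $\phi$: applying $a\otimes b\otimes c\mapsto aS(b)c$ to both sides and collapsing with the normalisation $(\varepsilon\otimes I)(F) = (I\otimes\varepsilon)(F) = 1$ already yields $\mu(I\otimes S)(F) = \mu(S\otimes I)(F)$, and one then wants to combine this with its transpose (Lemma~\ref{transp}) and with $S^2 = I$ to force $\phi(F) = \phi(F_{21})$, i.e. $S(w) = w$. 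Conceptually the content is that $w$ is a central group‑like involution, whence $S(w) = w^{-1} = w$; extracting group‑likeness and order two cleanly from the cocycle condition is the main obstacle, after which the proposition follows at once from the displayed reduction. Note that only the invariance of $R$, and not its triangularity, entered the reduction, which is exactly why the constancy is special to the subgroup $Out^1_{\Tw}(H)$ of invariant twists.
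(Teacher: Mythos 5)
Your reduction is essentially the paper's own argument: the paper invokes Drinfeld's theorem from \cite{dr1} that $\mu(I\otimes S):(H^{\otimes 2})^H\to Z(H)$ is an algebra homomorphism (this is exactly your properties (i) and (ii), which you reprove correctly rather than cite), and then factors $u(F_{21}RF^{-1}) = \mu(I\otimes S)(F_{21})\,u(R)\,\mu(I\otimes S)(F)^{-1}$ just as you do. The divergence is at the final step, and that is where your proposal has a genuine gap: the paper closes by asserting $\mu(I\otimes S)(F)=1$, whereas you only reduce the proposition to the identity $S(w)=w$ for $w=\mu(I\otimes S)(F)$ and then explicitly concede that extracting it from the cocycle condition is ``the main obstacle''. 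Since for $R=1\otimes 1$ the orbit element is $F_{21}F^{-1}$ with Drinfeld element exactly $S(w)w^{-1}$, the proposition is \emph{equivalent} to the claim you leave unproven; everything you do establish (centrality and multiplicativity of $\mu(I\otimes S)$ on the commutant of $\Delta(H)$) is the routine part, and all the content is concentrated in the skipped step.

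Moreover, the route you sketch cannot close the gap. The contraction identity $\mu(I\otimes S)(F)=\mu(S\otimes I)(F)$ is correct (note your derivation tacitly needs centrality of $v=\mu(S\otimes I)(F)$, which is available from your (i) applied to the invariant element $(S\otimes S)(F)$), but combining it with Lemma~\ref{transp} and $S^2=I$ is circular: applied to the invariant twist $F_{21}$ it reads $\sum F_2S(F_1)=\sum S(F_2)F_1$, and given $w=v$ both sides equal $S(w)$, so no relation between $w$ and $S(w)$ emerges. To be fair, your caution is well placed, because the paper's final claim is too strong as literally stated: on $H=k[\Z/2]=k\langle g\rangle$ with $e=\frac{1}{2}(1-g)$, the element $F=1\otimes 1-2\,e\otimes e$ is a normalized invariant twist with $\mu(I\otimes S)(F)=g\neq 1$; what is actually gauge-invariant and must be shown trivial is precisely your $z=S(w)w^{-1}=u(F_{21}F^{-1})$ (and indeed $S(g)=g$, so $z=1$ in this example). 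Proving $z=1$ in general requires input beyond the single contraction you extract — for instance, for the twists the paper actually produces on $U(\g)[[h]]$, namely $F=\prod_i\exp(X_ih^i)$ with $X_i\in(\Lambda^2\g)^\g$, one gets $\mu(I\otimes S)(F)=1$ on the nose, because $\mu(I\otimes S)(X_i)=0$: skew-symmetry gives $\mu(X_i)=-\mu(t(X_i))$ while invariance forces the support of $X_i$ to be abelian, so $\mu(X_i)=0$ — and neither your text nor your sketched combination supplies such an argument. The proposal therefore stops one genuine lemma short of the statement.
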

\begin{proof}
It is quite straightforward to check that the Drinfeld element is constant on orbits. Indeed, it was proved in
\cite{dr1} that the map $\mu(I\otimes S):(H^{\otimes 2})^H\to Z(H)$ is a homomorphism of algebras. Thus $$\mu(I\otimes
S)(F_{21}RF^{-1}) =  \mu(I\otimes S)(F_{21})\mu(I\otimes S)(R)\mu(I\otimes S)(F)^{-1}$$ equals $ \mu(I\otimes S)(R)$
since $ \mu(I\otimes S)(F)=1$.
\end{proof}

Categorically triangular structures correspond to symmetric structures on the category of modules. Recall that a
monoidal category $\cG$ is {\em symmetric} if it is equipped with a collection of isomorphisms $c_{X,Y}:X\otimes Y\to
Y\otimes X$ natural in $X,Y\in\cG$ and satisfying the following axioms: $$c_{X,Y}c_{Y,X} = 1,\quad \mbox{symmetry},$$
{\em hexagon} axioms: $$c_{X,Y\otimes Z} = \phi_{Y,Z,X}(Y\otimes c_{X,Z})\phi_{Y,X,Z}^{-1}(c_{X,Y}\otimes
Z)\phi_{X,Y,Z},$$ $$c_{X\otimes Y,Z} = \phi_{Z,X,Y}^{-1}(c_{X,Z}\otimes Y)\phi_{X,Z,Y}(X\otimes
c_{Y,Z})\phi_{X,Y,Z}^{-1}.$$ Note that the last condition is redundant and included here for the sake of symmetry.
\begin{prop}
A triangular structure $R$ on a bialgebra $H$ defines a symmetric structure: $$c_{M,N}:M\otimes N\to N\otimes N,\quad
m\otimes n\mapsto R(n\otimes m)$$ on the category $H-Mod$
\end{prop}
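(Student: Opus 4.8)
The plan is to verify directly that the assignment $c_{M,N}(m\otimes n)=R(n\otimes m)$ is a well-defined natural isomorphism of $H$-modules and that it satisfies the symmetry and hexagon axioms, translating each categorical condition into the corresponding property of the $R$-matrix listed before Proposition~\ref{minco}. The logical skeleton is a dictionary: naturality and $H$-linearity of $c$ will come from the intertwining relation (\ref{conjr}), the hexagon axioms from the two triangle equations $(I\otimes\Delta)(R)=R_{13}R_{12}$ and $(\Delta\otimes I)(R)=R_{13}R_{23}$, and the symmetry axiom $c_{X,Y}c_{Y,X}=1$ from the unitarity condition $R_{21}=R^{-1}$.

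First I would check that $c_{M,N}$ is a morphism of $H$-modules. For $h\in H$ and $m\otimes n\in M\otimes N$, applying $c_{M,N}$ after the $H$-action gives $R\cdot t\Delta(h)(n\otimes m)$ (the transposition $t$ appearing because we have swapped the tensor factors), while acting after $c_{M,N}$ gives $\Delta(h)R(n\otimes m)$; these agree precisely because of (\ref{conjr}), namely $Rt\Delta(h)=\Delta(h)R$. Naturality in $M$ and $N$ is immediate since $c_{M,N}$ is given by a universal element of $H\otimes H$ acting the same way on every module, so it commutes with all $H$-linear maps. Invertibility of $c_{M,N}$ follows from invertibility of $R$.

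Next I would verify the hexagon axioms. Since $H-Mod$ may be taken strict (the associativity constraint $\phi$ is the obvious one for vector spaces, and by the coherence remarks in the excerpt we may suppress brackets), the first hexagon reduces to the statement $c_{X,Y\otimes Z}=(Y\otimes c_{X,Z})(c_{X,Y}\otimes Z)$. Unwinding the left side, $c_{X,Y\otimes Z}$ acts by $(I\otimes\Delta)(R)$ composed with the appropriate cyclic permutation sending $X$ past $Y\otimes Z$; unwinding the right side produces $R_{13}R_{12}$ with the matching permutation. The triangle equation $(I\otimes\Delta)(R)=R_{13}R_{12}$ is exactly the identity of operators needed, with the permutations on both sides agreeing by inspection. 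The second hexagon follows symmetrically from $(\Delta\otimes I)(R)=R_{13}R_{23}$. Finally, the symmetry axiom amounts to $c_{N,M}c_{M,N}(m\otimes n)=R_{21}R(m\otimes n)$, which equals $m\otimes n$ precisely by unitarity $R_{21}=R^{-1}$.

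The main bookkeeping obstacle is keeping the transposition operators straight: because $c$ interchanges tensor factors, every composition of instances of $c$ carries an accompanying permutation of the underlying vector-space factors, and one must confirm that the permutations attached to the two sides of each hexagon coincide before the triangle equations can be invoked. I expect this index-tracking in the hexagons to be the only real subtlety; once the permutation conventions are fixed (matching the notation $R_{12}=R\otimes 1$, $R_{13}=(I\otimes(12))(R_{12})$ introduced with the triangle equations), each axiom collapses to one of the defining identities of a triangular structure, so the proof is a routine verification.
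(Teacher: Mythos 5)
Your proposal is correct and follows essentially the same route as the paper's proof: $H$-linearity of $c_{M,N}$ from the intertwining relation (\ref{conjr}), the hexagon axioms from the two triangle equations, and the symmetry axiom from unitarity $R_{21}=R^{-1}$ (the paper additionally notes in passing that normalisation of $R$ yields $c_{1,N}=I$ and $c_{M,1}=I$, a point your write-up omits but which is not among the stated axioms). Your extra care with the permutation bookkeeping in the hexagons simply fills in the detail behind the paper's one-line remark that the triangle equations are equivalent to the hexagon axioms.
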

\begin{proof}
The condition (\ref{conjr}) implies that $c_{M,N}$ is a morphism of $H$-modules: $$c_{M,N}(\Delta(x)(m\otimes
n)) = Rt\Delta(x)(n\otimes m) = \Delta(x)R(n\otimes m) = \Delta(x)c_{M,N}(m\otimes n).$$ Triangle equations are
equivalent to hexagon axioms. Normalisation for $R$ gives the conditions $c_{1,N} =I$, $c_{M,1} = I$. Unitarity for $R$
implies symmetry for $c$.
\end{proof}

Monoidal autoequivalences of a monoidal category act naturally on the set of symmetric structures of the category. For
a monoidal autoequivalence $F$ and a symmetry $c$ define the new symmetry $c^F$ by $$\xymatrix{ F(X)\otimes F(Y)
\ar[r]^{c^F_{F(X),F(Y)} }  & F(Y)\otimes F(X)\\ F(X\otimes Y) \ar[u]^{F_{X,Y}} \ar[r]^{F(c_{X,Y})} & F(Y)\otimes F(X)
\ar[u]^{F_{Y,X}} }$$ It is straightforward to see that this action corresponds to the action of twisted homomorphisms
on $R$-matrices.

\section{Twisted automorphisms of universal enveloping algebras}
In this part we look at twisted automorphisms of universal enveloping algebras over formal power series $k[[h]]$.

\subsection{Invariant twists of $U(\g)[[h]]$}\label{invtwi}
Let $F\in U(\g)[[h]]^\g$ be a twist. Expand it as a formal power series in $h$ with coefficients in $U(\g)$: $$F =
\sum_{i=0}^\infty F_ih^i.$$ Since invertible elements of a universal enveloping algebra over a field $k$ of
characteristic zero are trivial (scalars), the constant term $F_0$ of $F$ must be the identity. Let $X=F_l$ be the
first non-zero coefficient. The degree $l$ part of the 2-cocycle equation is the additive 2-cocycle condition for $X$:
$$1\otimes X + (I\otimes\Delta)(X) = X\otimes 1 + (\Delta\otimes I)(X).$$ Following Drinfeld \cite{dr} consider the
complex $(H^{\otimes *},\partial)$ with the differential $\partial:H^{\otimes n}\to H^{\otimes n+1}$ defined by
\begin{equation}\label{tangcoh}
\partial(X) = 1\otimes X +\sum_{i=1}^n(-1)^i(I^{\otimes i-1}\otimes\Delta\otimes I^{\otimes n-i-1})(X) + (-1)^{n+1}(X\otimes 1).
\end{equation}
The cohomology of this complex admits a simple description.
\begin{prop}
For a universal enveloping algebra $H=U(\g)$ the alternation map $Alt_n:H^{\otimes n}\to \Lambda^n H$ induces an
isomorphism of the n-th cohomology of the complex (\ref{tangcoh}) and $\Lambda^n\g$.
\end{prop}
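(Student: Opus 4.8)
The plan is to recognise $(H^{\otimes *},\partial)$ as the (non-reduced) cobar complex of the coalgebra $H=U(\g)$ and to compute its cohomology by reducing to the associated graded coalgebra. First I would record that $\partial^2=0$ is exactly coassociativity of $\Delta$, and that the normalisation by the counit allows one to replace the complex by its reduced version in which every tensor slot lies in $\ker\varepsilon$: the subcomplex spanned by tensors having a factor equal to $1$ splits off and is contractible (the counit supplies the degeneracies), and passing to the reduced complex does not affect $Alt_n$, since $Alt_n$ annihilates any tensor with a factor in $k\cdot 1$. Thus it suffices to compute the reduced cobar cohomology and to identify the class of a reduced cocycle with its image under $Alt_n$.

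Second, I would use the Poincar\'e--Birkhoff--Witt theorem in the form that the symmetrisation map $\omega:S(\g)\to U(\g)$ is an isomorphism of coalgebras, where $S(\g)$ carries the coproduct for which $\g$ is primitive. Since the complex (\ref{tangcoh}) depends only on the coalgebra structure, $\omega$ identifies it with the cobar complex of $S(\g)$; moreover $\omega$ is compatible with $Alt_n$ because it is the identity on the primitive part $\g$ and strictly raises the PBW filtration degree elsewhere. Hence the problem becomes the computation of the reduced cobar cohomology of the symmetric Hopf algebra $S(\g)$, which is graded by polynomial weight. As $\partial$ preserves weight, the complex splits as a direct sum over weights $w\ge 0$, and the computation can be done weight by weight.

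Third, I would invoke Koszulity of the symmetric algebra. In cohomological degree $n$ and weight $w$ the reduced complex is $\bigoplus S^{w_1}(\g)\otimes\cdots\otimes S^{w_n}(\g)$ with each $w_i\ge 1$ and $\sum w_i=w$, and this is precisely the weight-$w$ part of the complex whose cohomology is the Koszul dual $\Lambda(\g)$ of $S(\g)$. Concretely the cohomology in degree $n$ is $\Lambda^n\g$, concentrated in weight $w=n$. The constraint $w=n$ together with $w_i\ge 1$ forces every $w_i=1$, so each cohomology class in degree $n$ has a representative in $\g^{\otimes n}\subset H^{\otimes n}$; one then checks that the surviving cocycles there are exactly the antisymmetric tensors, that $Alt_n$ carries them bijectively onto $\Lambda^n\g$, and that every coboundary (which necessarily involves a slot of weight $\ge 2$ produced by splitting a weight through $\Delta$) alternates to zero. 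This yields the isomorphism $H^n\cong\Lambda^n\g$ realised by $Alt_n$, and transporting back along $\omega$ gives the statement for $U(\g)=H$.

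The main obstacle is the Koszul identification in the third step: showing that the weight-$w$, degree-$n$ cohomology of the symmetric coalgebra vanishes unless $w=n$ and equals $\Lambda^n\g$ in that case, while simultaneously verifying that $Alt_n$ is the explicit comparison map realising it. Once the cohomology is known to be concentrated on the diagonal $w=n$, the transition from $S(\g)$ to $U(\g)$ is automatic, since the PBW filtration spectral sequence can be nonzero only on that diagonal and therefore degenerates; the genuine care is needed not in any deep homological input but in confirming that $Alt_n$ kills coboundaries and restricts to a bijection on antisymmetric primitive tensors.
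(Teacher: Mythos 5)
Your proposal is correct in outline and shares its main reduction with the paper, but finishes by a genuinely different key lemma. The paper's (sketched) proof likewise invokes PBW to replace $U(\g)$ by the coalgebra $S^*(\g)$ and splits the complex (\ref{tangcoh}) into weight-graded pieces (\ref{grpiece}); it then identifies the weight-$n$ piece with the cochain complex of the simplicial $n$-cube tensored over the symmetric group with $\g^{\otimes n}$, reading off the cohomology from elementary cube combinatorics. You instead recognise (\ref{tangcoh}) as the unreduced cobar complex of the coaugmented coalgebra $H$, normalise away the degenerate part, and appeal to Koszulity of the symmetric (co)algebra, i.e.\ $\mathrm{Cotor}_{S(\g)}(k,k)\cong\Lambda^*\g$ concentrated on the diagonal weight $=$ degree. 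What each buys: your route plugs into standard homological machinery, and the diagonal concentration makes the transfer from $S(\g)$ back to $U(\g)$ automatic (indeed, since the PBW symmetrisation is an isomorphism of coalgebras and (\ref{tangcoh}) depends only on the coalgebra structure, no spectral sequence is needed at all -- your closing remark is belt-and-braces); the paper's cube identification is more self-contained and makes the representative cocycles and the role of $Alt_n$ visible by hand. Your observation that reduced degree-$n$, weight-$n$ cochains are forced to lie in $\g^{\otimes n}$ is exactly the mechanism in both arguments (note only that the cohomology there is a quotient of $\g^{\otimes n}$ by coboundaries, which $Alt_n$ identifies with $\Lambda^n\g$, rather than a subspace of ``surviving cocycles'').

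One supporting claim is false as stated: $Alt_n$ does \emph{not} annihilate tensors with a factor in $k\cdot 1$. For instance $Alt_1$ is the identity, and $Alt_2(1\otimes x)=\frac{1}{2}(1\otimes x - x\otimes 1)$ is nonzero in $\Lambda^2 H$ whenever $x\notin k\cdot 1$ (the target is $\Lambda^n H$, not $\Lambda^n(\ker\varepsilon)$). So your justification that ``passing to the reduced complex does not affect $Alt_n$'' does not stand; what is actually needed is that $Alt_n$ kills all coboundaries of the \emph{full} complex, so that the induced map on cohomology is well defined and computable on reduced representatives. This is easily repaired using cocommutativity of $U(\g)$: one checks $Alt_{n+1}\circ\partial=0$ identically on $H^{\otimes n}$, since the two outer terms $1\otimes X$ and $(-1)^{n+1}X\otimes 1$ differ by a cyclic shift of sign $(-1)^n$ and hence cancel after alternation, while each inner term $(I^{\otimes i-1}\otimes\Delta\otimes I^{\otimes n-i})(X)$ is symmetric in the slots $i,i+1$ by cocommutativity and therefore alternates to zero. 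With this substitution your argument goes through; the remaining reliance on the standard Koszul computation for $S(\g)$ is no worse than the paper's own deferral of details to Drinfeld.
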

\begin{proof}
Sketch of the proof (for details see \cite{dr}):
\newline
By the Poincare-Birkhoff-Witt theorem, the universal enveloping algebra $U(\g)$ is isomorphic as a coalgebra to the
symmetric algebra $S^*(\g)$. The complex (\ref{tangcoh}) for $H = S^*(\g)$ breaks into graded pieces:
\begin{equation}\label{grpiece}
S^n(\g)\to \oplus_{i_1+i_2=n}S^{i_1}(\g)\otimes S^{i_2}(\g)\to ...\to
\oplus_{i_1+...+i_s=n}\otimes_{j=1}^sS^{i_j}(\g)\to ...\to (\g)^{\otimes n}
\end{equation}
The degree $n$ piece is isomorphic to the cochain complex of the simplicial $n$-cube tensored (over symmetric group)
with $(\g)^{\otimes n}$.
\end{proof}

In particular, for an additive 2-cocycle $X\in U(\g)^{\otimes 2}$ there is $a\in U(\g)$ such that
\begin{equation}\label{cob}
X = \overline{X} + a\otimes 1 + 1\otimes a - \Delta(a),\quad \overline{X} = Alt_2(X) = \frac{1}{2}(X - X_{21}).
\end{equation}
Note that both $X$ and $\overline{X}$ are $\g$-invariant which makes $\partial(a) = a\otimes 1 + 1\otimes a -
\Delta(a)$ $\g$-invariant. The last implies that $a$ can be chosen to be $\g$-invariant (central). To see it we prove a
slightly more general statement.
\begin{lem}
For a universal enveloping algebra $H=U(\g)$ the alternation map $Alt_n:(H^{\otimes n})^H\to (\Lambda^n H)^H$ induces
an isomorphism of the n-th cohomology of the subcomplex of $H$-invariant elements of (\ref{tangcoh}) and the space of
$\g$-invariant skewsymmetric tensors $(\Lambda^n\g)^\g$.
\end{lem}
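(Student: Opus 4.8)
The plan is to upgrade the (sketched) proof of the preceding proposition to an $H$-equivariant one, where $H=U(\g)$ acts on $H^{\otimes n}$ by the adjoint tensor action coming from cocommutativity. First I would observe that the differential $\partial$ of (\ref{tangcoh}) is a map of $H$-modules: each of its terms is assembled from $\Delta$ and the unit inclusion, all of which are morphisms for the adjoint action, so $\partial$ commutes with the action and $(H^{\otimes *})^H$ is indeed a subcomplex. The alternation map $Alt_n$ is likewise $H$-equivariant, hence restricts to $Alt_n:(H^{\otimes n})^H\to (\Lambda^n H)^H$. Thus the statement to prove is that, after passing to invariants, the induced map on cohomology is still an isomorphism, now with target $(\Lambda^n\g)^\g$.

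The heart of the matter is that the entire computation of the preceding proposition can be carried out $\g$-equivariantly. The PBW/symmetrisation identification $U(\g)\simeq S^*(\g)$ is an isomorphism of coalgebras intertwining the adjoint actions, so the decomposition (\ref{grpiece}) of the complex into graded pieces is a decomposition of complexes of $\g$-modules, and within the degree-$n$ piece the identification with the cochain complex of the simplicial $n$-cube tensored over the symmetric group with $\g^{\otimes n}$ is $\g$-equivariant, the whole $\g$-action being concentrated in the factor $\g^{\otimes n}$. In this description $\g$ acts trivially on the cube, and the contracting homotopy exhibiting the cube complex as quasi-isomorphic to $\Lambda^n\g$ (placed in top degree, via $Alt_n$) acts only on the combinatorial cube indices, leaving the $\g^{\otimes n}$ factor untouched.

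Consequently the homotopy is automatically $\g$-equivariant, so it restricts to the subcomplex of $\g$-invariants and exhibits $((H^{\otimes *})^H,\partial)$ as quasi-isomorphic, in degree $n$, to $(\Lambda^n\g)^\g$ via $Alt_n$. I would then conclude as in the non-invariant case, after remarking that $H$-invariance and $\g$-invariance coincide: every element of $H=U(\g)$ is a noncommutative polynomial in $\g$, so it is fixed under conjugation by $H$ exactly when it is annihilated by the adjoint action of $\g$.

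The main obstacle is precisely the exactness issue: in general $(-)^\g$ is only left exact, so $H^n((C^\bullet)^\g)$ need not equal $(H^n(C^\bullet))^\g$. What makes the argument go through is that the complex is \emph{equivariantly} contractible onto its cohomology; producing a $\g$-equivariant contracting homotopy is what bypasses any reductivity or semisimplicity hypothesis on $\g$. The one point to verify with care is therefore that Drinfeld's combinatorial homotopy on the simplicial cube genuinely does not involve the $\g^{\otimes n}$ tensor factor, so that it descends to invariants; granting this, the isomorphism on invariant cohomology follows formally.
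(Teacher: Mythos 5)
Your proposal is correct and follows essentially the same route as the paper: the paper's own (very terse) proof rests on exactly the two points you develop, namely the $\g$-equivariance of the PBW coalgebra isomorphism $U(\g)\simeq S^*(\g)$ and the naturality in $\g$ (hence $\g$-equivariance) of the identification of the degree-$n$ graded piece with the cochain complex of the simplicial $n$-cube tensored with $\g^{\otimes n}$. Your additional remarks --- that the contracting homotopy lives entirely on the combinatorial cube factor, untouched by the $\g$-action, and that this is precisely what circumvents the failure of exactness of $(-)^\g$ --- make explicit the content the paper leaves implicit, rather than constituting a different argument.
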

\begin{proof}
The coalgebra isomorphism between $U(\g)$ and $S^*(\g)$ is $\g$-invariant. The isomorphism between the degree $n$ piece
(\ref{tangcoh}) and the cochain complex of the simplicial $n$-cube tensored with $(\g)^{\otimes n}$ is natural in $\g$
and in particular $\g$-invariant.
\end{proof}

For the central $a$ satisfying (\ref{cob}) the exponent $exp(ah^l)$ defines a gauge transformation of invariant twists
$F\to F'$ where $F' = 1 + \overline{X}h^l + ... .$. Thus we can assume (up to a gauge transformation) that
$X\in(\Lambda^2\g)^\g$. Note that $\g$-invariance of $X$ implies $$[1\otimes X,(I\otimes\Delta)(X)] = [X\otimes
1,(\Delta\otimes I)(X)] = 0.$$ Hence the exponent $exp(Xh^l)$ is an invariant twist on $U(\g)[[h]]$:
$$(exp(Xh^l)\otimes 1)(\Delta\otimes I)(exp(Xh^l)) = exp((X\otimes 1 + (\Delta\otimes I)(X))h^l) =$$ $$exp((1\otimes X
+ (I\otimes\Delta)(X))h^l) = (1\otimes exp(Xh^l))(I\otimes\Delta)(exp(Xh^l)).$$ Writing $F$ as $exp(Xh^l)\circ F'$ we
will have at least the first $l$ components of $F'$ being zero. Iterating the argument we prove the following.
\begin{prop}
Any invariant twist on $U(\g)[[h]]$ is gauge isomorphic to a product $\prod_{i=1}^\infty exp(X_ih^i)$ where $X_i\in
(\Lambda^2\g)^\g$.
\end{prop}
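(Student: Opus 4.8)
The plan is to prove the statement by an inductive "order-by-order" clearing procedure on the $h$-adic filtration, using the machinery already assembled in this subsection. Let $F$ be an invariant twist on $U(\g)[[h]]$. As noted, the constant term $F_0$ is forced to be $1$, so $F = 1 + Xh^l + \ldots$ where $X = F_l$ is the first nonzero coefficient and $l\geq 1$. The degree-$l$ part of the $2$-cocycle equation says $X$ is an additive $2$-cocycle, and $\g$-invariance of $F$ passes to $X$, so $X$ is an invariant additive $2$-cocycle. By the preceding Lemma (the invariant version of the cohomology computation), after subtracting the coboundary $\partial(a)$ for a \emph{central} $a$ determined by (\ref{cob}), we may replace $X$ by its skew part $\overline{X} = Alt_2(X)\in(\Lambda^2\g)^\g$. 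Concretely, the gauge transformation $exp(ah^l)$ carries $F$ to an invariant twist $F'$ whose leading coefficient is $\overline{X}\in(\Lambda^2\g)^\g$.

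Next I would peel off the exponential factor $exp(\overline{X}h^l)$. The excerpt has already verified that for $\overline{X}\in(\Lambda^2\g)^\g$ the element $exp(\overline{X}h^l)$ is a genuine invariant twist (the $\g$-invariance forces the relevant commutators to vanish, which makes the $2$-cocycle condition reduce to the additive one in the exponent). Since twists into a Hopf algebra are invertible and, by the composition formula of this section, the invariant twists form a group, I may write $F' = exp(\overline{X}h^l)\circ F''$, where the composite is the product in $U(\g)[[h]]^{\otimes 2}$ (the first component being the identity automorphism). Because the leading terms of $exp(\overline{X}h^l)$ and $F'$ agree through order $l$, the remaining factor $F''$ is an invariant twist whose first $l$ coefficients vanish — i.e.\ its $h$-adic order is strictly greater than $l$.

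This strict increase in the order of vanishing is the engine of the induction. Iterating, at each stage I produce a new invariant factor $exp(X_ih^i)$ with $X_i\in(\Lambda^2\g)^\g$ and push the nontrivial part of the twist to strictly higher order in $h$. Collecting the factors in order of increasing degree gives the infinite product $\prod_{i=1}^\infty exp(X_ih^i)$, and the overall gauge transformation relating it to $F$ is the (ordered, convergent) product of the successive $exp(a_jh^j)$ coboundary corrections. The $h$-adic completeness of $k[[h]]$ guarantees that both the infinite product of twists and the infinite product of gauge elements converge, so the construction is well defined and the resulting product is gauge isomorphic to $F$.

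The main obstacle I expect is bookkeeping around \emph{convergence and noncommutativity} of the infinite products rather than any single hard estimate: one must check that the ordered product $\prod_i exp(X_ih^i)$ is $h$-adically convergent and that composing infinitely many gauge transformations $exp(a_jh^j)$ yields a well-defined gauge transformation of $F$. Both follow from the fact that the $j$-th factor is congruent to $1$ modulo $h^j$, so only finitely many factors affect any fixed order in $h$; nonetheless one should state this filtration argument carefully, since the factors need not commute and the cocycle/gauge relations must be verified to persist at each order. A secondary point worth a remark is that the $X_i$ need not be unique (the decomposition depends on the order in which exponentials are extracted and on the central coboundary choices), so the statement is an existence result about gauge classes, not a canonical normal form.
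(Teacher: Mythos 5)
Your proof is correct and follows the paper's argument essentially step for step: the same induction on the $h$-adic order of vanishing, gauging the leading coefficient $X=F_l$ to its alternation $\overline{X}=Alt_2(X)\in(\Lambda^2\g)^\g$ via $exp(ah^l)$ with $a$ chosen central (the invariant version of the cohomology lemma), peeling off the invariant twist $exp(\overline{X}h^l)$ using the group structure on invariant twists, and iterating, with your convergence bookkeeping merely making explicit what the paper leaves implicit. One small correction to your closing caveat: the paper notes right after the proposition that the $X_i$ \emph{are} uniquely determined by $F$ (coboundaries $\partial(a)$ have zero alternation by cocommutativity, so the alternation of the leading coefficient is gauge-invariant at each stage), though this does not affect your existence argument.
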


Note that the components $X_i$ are defined uniquely by the twist $F$. Thus the set $\pi_0(\Aut^1_{\Tw}(U(\g)[[h]]))$ of
classes of invariant twists is isomorphic to $(\Lambda^2\g)^\g[[h]]$. Another way to see it is to use the logarithmic
map. Since, for an invariant twist $F$, the factors $1\otimes F,\ (I\otimes\Delta)(F)$ and $F\otimes 1,\ (\Delta\otimes
1)(F)$ of the 2-cocycle equation pairwise commute, the logarithm $log(F)$ is an additive 2-cocycle. Hence $F$ is gauge
isomorphic to $exp(X)$, where $X = Alt_2(log(F))\in (\Lambda^2\g)^\g[[h]]$.

To examine the group structure on $\pi_0(\Aut^1_{\Tw}(U(\g)[[h]]))$ we will use the Baker-Campbell-Hausdorff formula:
$$exp(X)exp(Y) = exp(X+Y)exp(A(X,Y)),$$ where $A(X,Y)$ is an element of the completion (with respect to the natural
grading) of the free Lie algebra on $X,Y$. Note that $$A(X,Y)= \frac{1}{2}[X,Y] + \mbox{higher terms}.$$ Now for
$X,Y\in (\Lambda^2\g)^\g[[h]]$ the commutator $[X,Y]$ is an additive $\g$-invariant 2-cocycle and is symmetric. Thus
there is a central $a(X,Y)\in Z(U(\g))[[h]]$ such that
\begin{equation}\label{cobcom}
[X,Y] = a(X,Y)\otimes 1 + 1\otimes a(X,Y) - \Delta(a(X,Y)).
\end{equation}
Note also
that any $Z\in (\Lambda^2\g)^\g[[h]]$ must commute with $[X,Y]$. Indeed, the first commutator in $$[[X,Y],Z] =
[a(X,Y)\otimes 1 + 1\otimes a(X,Y),Z] - [\Delta(a(X,Y)),Z]$$ is zero by centrality of $a(X,Y)$ while the second
vanishes because of $\g$-invariance of $Z$. In particular, the higher terms in $A(X,Y)$ are all zero and $A(X,Y) =
\partial(\frac{1}{2}a(X,Y))$. Thus the exponent $exp(\frac{1}{2}a(X,Y))$ is a gauge transformation between the
invariant twists $exp(X)exp(Y)$ and $exp(X+Y)$, which proves the following statement.
\begin{theo}
The group $\pi_0(\Aut^1_{\Tw}(U(\g)[[h]]))$ of classes of invariant twists is isomorphic to the addititve group
$(\Lambda^2\g)^\g[[h]]$.
\end{theo}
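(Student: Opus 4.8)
The plan is to promote the set-level bijection already established to a group isomorphism. First I would fix the map $\Phi$ sending a gauge class of invariant twists $[F]$ to $X = Alt_2(log(F))\in(\Lambda^2\g)^\g[[h]]$. By the preceding discussion this is a well-defined bijection: every invariant twist is gauge isomorphic to $exp(X)$ with $X = Alt_2(log(F))$, and the components $X_i$ are determined uniquely by $F$, so $\Phi$ is simultaneously surjective and injective on gauge classes. Thus only the compatibility with the two group laws remains to be checked.

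Next I would pin down the two structures being compared. The group law on $\pi_0(\Aut^1_{\Tw}(U(\g)[[h]]))$ is the one induced by the tensor product (composition) of the Cat-group, which for twisted automorphisms with identity first component reduces to multiplication of twists, $(I,F')\circ(I,F) = (I,FF')$. The target carries the additive structure of $(\Lambda^2\g)^\g[[h]]$. So the entire content of the theorem is the identity $\Phi([F][F']) = \Phi([F]) + \Phi([F'])$, that is, that the product $exp(X)exp(Y)$ of two normal-form twists is gauge isomorphic to $exp(X+Y)$.

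This is precisely the computation assembled in the paragraph preceding the statement, and I would invoke it verbatim. By the Baker-Campbell-Hausdorff formula $exp(X)exp(Y) = exp(X+Y)exp(A(X,Y))$ with $A(X,Y) = \frac{1}{2}[X,Y] + \mbox{higher terms}$. For $X,Y\in(\Lambda^2\g)^\g[[h]]$ the bracket $[X,Y]$ is a symmetric $\g$-invariant additive 2-cocycle, hence a coboundary $[X,Y] = \partial(a(X,Y))$ with $a(X,Y)$ central; and since every $Z\in(\Lambda^2\g)^\g[[h]]$ commutes with $[X,Y]$ (the central part by centrality of $a(X,Y)$, the $\Delta$-part by $\g$-invariance of $Z$), all higher BCH terms vanish and $A(X,Y) = \partial(\frac{1}{2}a(X,Y))$. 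Therefore $exp(\frac{1}{2}a(X,Y))$ is a gauge transformation between $exp(X)exp(Y)$ and $exp(X+Y)$, which gives the homomorphism property; combined with injectivity of $\Phi$ this also forces $[FF'] = [F'F]$, so the group is automatically abelian, as the target demands.

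The step I expect to be the genuine obstacle is exactly the collapse of the higher Baker-Campbell-Hausdorff terms: a priori $A(X,Y)$ lives in the completion of the free Lie algebra on $X,Y$ and could contribute nested brackets of every order. What makes the argument work is the twofold use of centrality and $\g$-invariance — first to exhibit $[X,Y]$ as a \emph{central} coboundary, then to show it commutes with every such $Z$ — which truncates the BCH series after its first term. Once this collapse is secured, the remaining verifications that $\Phi$ respects the group laws and is bijective are formal, and the proof concludes.
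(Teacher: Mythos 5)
Your proposal is correct and takes essentially the same route as the paper: the bijection $[F]\mapsto Alt_2(\log F)$, the Baker--Campbell--Hausdorff expansion $\exp(X)\exp(Y)=\exp(X+Y)\exp(A(X,Y))$, the identification of $[X,Y]$ as a symmetric $\g$-invariant additive 2-cocycle and hence a coboundary $\partial(a(X,Y))$ with $a(X,Y)$ central, and the commutation of $[X,Y]$ with every $Z\in(\Lambda^2\g)^\g[[h]]$ to truncate the BCH series, so that $\exp(\frac{1}{2}a(X,Y))$ gauges $\exp(X)\exp(Y)$ to $\exp(X+Y)$. You also correctly isolate the collapse of the higher BCH terms as the crux, which is exactly where the paper's argument does its work.
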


Moreover, we can calculate the associator class $$\phi\in H^3(\pi_0(\Aut^1_{\Tw}(U(\g)[[h]])),
\pi_1(\Aut^1_{\Tw}(U(\g)[[h]])))$$ of the Cat-group $\Aut^1_{\Tw}(U(\g)[[h]])$ of invariant twists. For our choice of
gauge transformation between the invariant twists $exp(X)exp(Y)$ and $exp(X+Y)$ the logarithm of the associator on
$exp(X),exp(Y),exp(Z)$ (as an element of $Z(\g)[[h]]\subset Z(U(\g))[[h]]$) equals
\begin{equation}\label{ass}
\frac{1}{2}(a(X,Y) + a(X+Y,Z) - a(Y,Z) - a(X,Y+Z)).
\end{equation}
Since both $\pi_i(\Aut^1_{\Tw}(U(\g)[[h]])), i=0,1$ are divisible abelian groups (vector spaces over $k$) with trivial
action of the first on the second, the cohomology group $$H^3(\pi_0(\Aut^1_{\Tw}(U(\g)[[h]])),
\pi_1(\Aut^1_{\Tw}(U(\g)[[h]])))\simeq H^3((\Lambda^2\g)^\g[[h]], Z(\g)[[h]])$$ is isomorphic to the group
$Hom(\Lambda^3((\Lambda^2\g)^\g[[h]]), Z(\g)[[h]])$ of skew-symmetric maps via the map which takes a group 3-cocycle
into its alternation. Clearly, the alternation of the (logarithm of the) associator (\ref{ass}) is zero. Thus the class
$\phi$ is trivial.

\begin{rem}Tangent Cat-Lie algebra of $\Aut^1_{\Tw}(U(\g))$.
\end{rem}
We can formalise the ground ring dependence of $\Aut^1_{\Tw}(U(\g))$ in the form of a Cat-group valued pseudo-functor
$$k\mapsto \Aut^1_{\Tw}(U_k(\g))$$ on the category of Artinian local commutative algebras. This point of view allows us
to define in a standard way the tangent Cat-Lie algebra (crossed module of Lie algebras) for $\Aut^1_{\Tw}(U(\g))$:
$$Z^2 \stackrel{\partial}{\leftarrow} C^1.$$ Here $Z^2$ is the Lie algebra (with respect to the commutator in
$U(\g)^{\otimes 2}$) of 2-cocycles of the subcomplex of $\g$-invariants of (\ref{tangcoh}) and $C^1$ is the abelian Lie
algebra of 1-cochains of the same subcomplex. The action of $Z^2$ on $C^1$ is trivial and the commutator
$[X,\partial(a)]$ is zero for any $X\in Z^2$ and $a\in C^1$ (thus fulfilling the axioms of a crossed module of Lie
algebras). Writing $[X,Y] = [Alt_2(X),Alt_2(Y)]$ as $\partial(a(X,Y))$ for $a(X,Y) = a(Alt_2(X),Alt_2(Y))$ as before we
can see that $a(X,[Y,Z]) = 0$ so the Jacobiator of the crossed module of Lie algebras is trivial.

\subsection{Separation for twisted automorphisms}
Here we examine twisted automorphisms $(f,F):H\to H$ where $H = U(\g)[[h]]$. The constant term (with respect to $h$) of
$F$ must be the identity. Thus by condition (\ref{conj}) the constant term of $f$ must be an automorphism of the
bialgebra $H$, hence must be induced by an automorphism of the Lie algebra $\g$. These allow us to assume without loss
of generality (up to an automorphism of $\g$) that $$f = I + \sum_{i=1}^\infty f_ih^i,\quad F = 1 + \sum_{i=1}^\infty
F_ih^i .$$ Let $X=F_l$ be the first non-zero coefficient. As before, the degree $l$ part of the 2-cocycle equation is
the additive 2-cocycle condition for $X$: $$1\otimes X + (I\otimes\Delta)(X) = X\otimes 1 + (\Delta\otimes I)(X).$$ As
before we write $$X = \overline{X} + a\otimes 1 + 1\otimes a - \Delta(a)$$ for $\overline{X} = Alt_2(X)$ and some $a\in
U(\g)$. The exponent $exp(ah^l)$ defines a gauge transformation of twisted automorphisms $(f,F)\to(f',F')$ where $F' =
1 + \overline{X}h^l + ... .$. Hence we can assume (up to a gauge transformation) that $X\in\Lambda^2\g$. Now the left
hand side of the degree $l$ part of the condition (\ref{conj}), namely, $$\sum_{i+j=l}(f_i\otimes f_j)\Delta(x) -
\Delta(f_l(x)) = [X,\Delta(x)]$$ is symmetric while the right hand side is anti-symmetric. That means both sides are
zero. In particular, $X\in\Lambda^2\g$ is $\g$-invariant and $exp(Xh^l)$ is a twist on $U(\g)[[h]]$. Writing $(f,F)$ as
$(1,exp(Xh^l))\circ(f,F')$ we will have at least the first $l$ components of $F'$ being zero. Proceeding like that
(using induction by the number of first successive zero components in $F$) we prove the following.
\begin{prop}
Any twisted automorphism of a universal enveloping algebra $U(\g)[[h]]$ is gauge isomorphic to a separated twisted
automorphism, i.e. a twisted automorphism of the form $(1,F)\circ(f,1)$ where $F$ is an invariant twist on $U(\g)[[h]]$
and $f$ is a bialgebra automorphism of $U(\g)[[h]]$.
\end{prop}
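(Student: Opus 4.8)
The plan is to carry out the inductive separation argument that has already been set up in the discussion immediately preceding the statement, and to verify that the iteration actually converges to produce a factorization into a separated piece followed by a bialgebra automorphism. The key observation driving the whole argument is that the obstruction to separation lives in the \emph{antisymmetric} part of the twist, while the failure of $f$ to be a coalgebra map contributes a \emph{symmetric} quantity; comparing symmetry types forces both to vanish simultaneously at each order.

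First I would normalise using the constant-term analysis already given: since $F_0 = 1$ and the constant term of $f$ is a Lie algebra automorphism, I compose with that automorphism of $\g$ so that $f = I + \sum_{i\geq 1} f_i h^i$ and $F = 1 + \sum_{i\geq 1} F_i h^i$. Then I would run the core step: letting $X = F_l$ be the first nonzero coefficient, the degree-$l$ part of the $2$-cocycle equation gives that $X$ is an additive $2$-cocycle, so by (\ref{cob}) I write $X = \overline{X} + \partial(a)$ with $\overline{X} = Alt_2(X)$ and some $a \in U(\g)$. Multiplying by the gauge transformation $exp(ah^l)$ replaces $F$ by $F'$ whose first nonzero coefficient is $\overline{X} \in \Lambda^2\g$. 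Now I would exploit the degree-$l$ part of (\ref{conj}): its left-hand side $\sum_{i+j=l}(f_i\otimes f_j)\Delta(x) - \Delta(f_l(x))$ reduces to $[X, \Delta(x)]$, which is symmetric, whereas the right-hand side is antisymmetric, so both are zero. This yields $\g$-invariance of $X$, whence $exp(Xh^l)$ is a genuine invariant twist, and I factor $(f,F) = (1, exp(Xh^l))\circ (f, F')$ where $F'$ now begins at strictly higher order.

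The main obstacle, and the step I would treat most carefully, is the \emph{convergence of the iteration}. Each application of the core step strictly increases the $h$-adic order of the first nonzero correction term in the twist factor, so the partial products $\prod exp(X_i h^i)$ stabilise order-by-order in the $h$-adic topology; since $U(\g)[[h]]$ is $h$-adically complete, the infinite product converges to a well-defined invariant twist $F_{\mathrm{inv}}$. I would then argue that the residual first component, after all twist corrections have been absorbed, is forced to be a bialgebra automorphism: once the twist factor has been peeled off entirely, condition (\ref{conj}) for the remaining pair $(f,1)$ reads $\Delta(f(x)) = (f\otimes f)\Delta(x)$, i.e. $f$ is a coalgebra map, and being already an algebra map it is a bialgebra automorphism. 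One subtlety worth flagging is that the gauge transformations $exp(ah^l)$ accumulated along the way must themselves be assembled into a single convergent gauge transformation, which again follows from $h$-adic completeness and the fact that each correction occurs at a higher order.

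Finally, I would package the conclusion using the composition law and Proposition~\ref{catme}: the limit exhibits $(f,F)$ as gauge isomorphic to $(1, F_{\mathrm{inv}})\circ(f_\infty, 1)$ with $F_{\mathrm{inv}}$ an invariant twist and $f_\infty$ a bialgebra automorphism, which is precisely the asserted separated form. I expect the only genuinely delicate point to be the bookkeeping that guarantees both the product of twists and the product of gauge transformations converge compatibly; the symmetry-versus-antisymmetry dichotomy that kills the cross terms at each order is the conceptual heart and is essentially immediate once the degree-$l$ truncation of (\ref{conj}) is written out.
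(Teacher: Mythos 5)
Your proof is correct and follows essentially the same route as the paper: normalise the constant term by an automorphism of $\g$, remove the symmetric part of the leading coefficient $X=F_l$ via the gauge transformation $exp(ah^l)$ using (\ref{cob}), deduce $\g$-invariance of $X$ from the symmetric-versus-antisymmetric comparison in the degree-$l$ part of (\ref{conj}), peel off $(1,exp(Xh^l))$, and iterate. The only difference is that you make explicit the $h$-adic convergence of the accumulated twists and gauge transformations, which the paper leaves implicit in its ``proceeding like that, using induction'' remark.
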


\subsection{The Cat-group $\Aut_{\Tw}(U(\g)[[h]])$}

Since any symmetric invariant twist is isomorphic to a trivial one, the group $$Aut_{\Tw}(U(\g)[[h]]) =
\pi_0(\Aut_{\Tw}(U(\g)[[h]]))\simeq Out_{bialg}(U(\g)[[h]])\ltimes (\Lambda^2\g)^\g[[h]]$$ is the crossed product of
the group of outer bialgebra automorphisms of $U(\g)[[h]]$ and the group of gauge classes of invariant twists. Any
bialgebra automorphism of a universal enveloping algebra is induced by a Lie algebra automorphism. Thus the group
$Out_{bialg}(U(\g)[[h]])$ in its turn is the crossed product $$Aut(\g)\ltimes (1+hOutDer(\g)[[h]])$$ of the group of
automorphisms of the Lie algebra $\g$ and the exponent of the Lie algebra $hOutDer(\g)[[h]]$ of outer derivations of
$\g[[h]]$ of degree $\geq 1$. The action of the subgroup $Aut(\g)$ on the group of gauge classes of invariant twists
$(\Lambda^2\g)^\g[[h]]$ is $$f(F) = (f\otimes f)(F).$$ The action of the degree $\geq 1$ part $(1+hOutDer(\g)[[h]])$ is
induced by the action of the Lie algebra of derivations $Der(\g)$ on the space $(\Lambda^2(\g))^\g$: $$dX = (d\otimes I
+ I\otimes d)(X).$$ It is straightforward to see that inner derivations act trivially (this is equivalent to
$\g$-invariance). To see that $\g$-invariance is preserved by this action we need to verify that $d_xd(X) =0$ for any
$x\in\g$. Here $d_x(y) = [x,y]$ is the inner derivation corresponding to $x$. Since $d_xd = dd_x + d_{d(x)}$ we have
$d_xdX = dd_x(X) + d_{d(x)}X = 0$.

Note that $\pi_1(\Aut_{\Tw}(U(\g)[[h]])) = \pi_1(\Aut^1_{\Tw}(U(\g)[[h]])) = Z(\g)[[h]]$.
\begin{prop}
The associator $$\phi\in H^3(Aut_{\Tw}(U(\g)[[h]]),Z(U(\g)[[h]]))$$ of the Cat-group $\Aut_{\Tw}(U(\g)[[h]])$ is the
image of the associator $$\psi\in H^3(Out_{bialg}(U(\g)[[h]]),Z(U(\g)[[h]]))$$ of the Cat-group
$\Aut_{bialg}(U(\g)[[h]])$ under the homomorphism of groups $Out_{bialg}(U(\g)[[h]])\to Aut_{\Tw}(U(\g)[[h]])$.
\end{prop}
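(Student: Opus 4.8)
The plan is to mimic the explicit computation already used for the associator of $\Aut^1_{\Tw}(U(\g)[[h]])$, now carried out for the full Cat-group and organized around the semidirect decomposition $\pi_0(\Aut_{\Tw}(U(\g)[[h]])) = Out_{bialg}(U(\g)[[h]]) \ltimes (\Lambda^2\g)^\g[[h]]$ established above. First I would fix a section $s\colon Out_{bialg}\to Aut_{bialg}$, choosing a representative bialgebra automorphism $\hat f = s(\bar f)$ for each outer class, together with its defect cochain $a_{bialg}$ valued in $G(H)$; this is exactly the data computing $\psi$ via the formula $\psi = a_{bialg}(\bar f,\bar g\bar h)\,{}^{\sigma(\bar f)}a_{bialg}(\bar g,\bar h)\,a_{bialg}(\bar f,\bar g)^{-1}a_{bialg}(\bar f\bar g,\bar h)$. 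Combining it with the canonical section $X\mapsto \exp(X)$ of invariant twists gives an adapted section $\sigma(\bar f,X) = (\hat f,\exp(X)) = (1,\exp(X))\circ(\hat f,1)$ of the crossed module $Aut_{\Tw}(H)\xleftarrow{\partial} H_\varepsilon^\cross$.

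Next I would compute the $2$-cochain $a(p,q)\in H_\varepsilon^\cross$ defined by $\sigma(pq) = \partial(a(p,q))\,\sigma(p)\sigma(q)$. Composing separated representatives via $(f_2,F_2)\circ(f_1,F_1) = (f_2f_1,f_2(F_1)F_2)$ produces the algebra part $\hat f_2\hat f_1$ and the twist $(\hat f_2\otimes\hat f_2)(\exp(X_1))\exp(X_2)$. Two structural facts let this defect split. First, the action of $Aut_{bialg}$ on invariant skew tensors factors through $Out_{bialg}$ (inner automorphisms act trivially on $(\Lambda^2\g)^\g$, the same mechanism as inner derivations acting trivially, already used to form the semidirect product), so $(\hat f_2\otimes\hat f_2)(X_1) = \bar f_2\cdot X_1$ genuinely in $(\Lambda^2\g)^\g[[h]]$. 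Second, by the Baker--Campbell--Hausdorff analysis already carried out, $\exp(\bar f_2\cdot X_1)\exp(X_2)$ differs from $\exp(\bar f_2\cdot X_1 + X_2)$ only by the inner invariant twist $\partial(\exp(\tfrac12 a(\bar f_2\cdot X_1,X_2)))$ with central $a(\,\cdot\,,\,\cdot\,)$. I therefore expect $a(p,q)$ to factor, up to a central correction, as the product of the group-like bialgebra defect $a_{bialg}(\bar f_2,\bar f_1)\in G(H)$ and the central twist defect $\exp(\tfrac12 a(\bar f_2\cdot X_1,X_2))$.

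I would then substitute into $\phi(p,q,r) = a(p,qr)\,{}^{\sigma(p)}a(q,r)\,a(p,q)^{-1}a(pq,r)\in \ker\partial = Z(\g)[[h]]$. Because $\ker\partial$ is central and the defect is a product of an $Out_{bialg}$-type factor and an exponential-central twist factor, the $3$-cocycle should separate into a product of two independent $3$-cocycles on $\pi_0(\Aut_{\Tw}(U(\g)[[h]]))$. The first depends only on the $Out_{bialg}$-components and is, by construction of $s$ and $a_{bialg}$, the inflation along the projection $\pi_0(\Aut_{\Tw}(U(\g)[[h]]))\to Out_{bialg}(U(\g)[[h]])$ of the bialgebra associator $\psi$; the second is precisely the associator of $\Aut^1_{\Tw}(U(\g)[[h]])$, already shown to be trivial (its alternation vanishes and $H^3$ of the divisible abelian group $(\Lambda^2\g)^\g[[h]]$ with trivial action on $Z(\g)[[h]]$ is detected by alternation). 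This yields $\phi$ as the image of $\psi$ under $Out_{bialg}(U(\g)[[h]])\to Aut_{\Tw}(U(\g)[[h]])$, as asserted.

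The step I expect to be the main obstacle is the vanishing of the cross terms above, i.e.\ showing that the interaction between the bialgebra-automorphism defect and the twist-multiplication defect contributes only a coboundary. Concretely one must verify that conjugating the central twist corrections by the representatives $\hat f$ and re-expanding them produces no surviving $Z(\g)[[h]]$-valued contribution in the alternating sum; this should follow from centrality of $\ker\partial$ together with $\g$-invariance --- the very fact that invariant tensors commute with commutators $[X,Y]$ that already trivialized the associator of $\Aut^1_{\Tw}(U(\g)[[h]])$. A cleaner equivalent organization is to construct a monoidal retraction $\Aut_{\Tw}(U(\g)[[h]])\to\Aut_{bialg}(U(\g)[[h]])$ inducing the projection on $\pi_0$ and the identity on $\pi_1$, and to invoke the functoriality relation $d(\theta) = \pi_1(\cdot)_*\phi - \pi_0(\cdot)^*\psi$ for its associator invariant; realizing this retraction is exactly the content of the separation proposition.
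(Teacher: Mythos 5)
Your overall strategy is the same as the paper's: using the decomposition $\pi_0(\Aut_{\Tw}(U(\g)[[h]])) = Out_{bialg}(U(\g)[[h]])\ltimes(\Lambda^2\g)^\g[[h]]$ and the adapted section, the claim reduces to showing the associator is trivial whenever at least one argument lies in $(\Lambda^2\g)^\g[[h]]$, with the all-invariant-twists case already settled in section \ref{invtwi}. The genuine gap sits exactly at the step you flag as the ``main obstacle'': you assert the cross terms vanish by ``centrality of $\ker\partial$ together with $\g$-invariance,'' but this does not follow. The solution $a(X,Y)$ of $[X,Y] = a(X,Y)\otimes 1 + 1\otimes a(X,Y) - \Delta(a(X,Y))$ is determined only modulo $\ker\partial = Z(\g)[[h]]$, and the associator on mixed arguments measures precisely the $Z(\g)[[h]]$-valued discrepancies between $a(g(X),g(Y))$ (and its derivation-twisted analogues) and $a(X,Y)$ for whatever solutions you happened to choose. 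Centrality only ensures these discrepancies land in the correct coefficient group; it gives no reason for them to cancel in the alternating sum or to assemble into a coboundary. The paper closes this hole by an explicit equivariant \emph{choice}: in section \ref{geomdes} it constructs $a(X,Y)$ as the central element corresponding to a canonically defined symmetric invariant $3$-vector $c\in(S^3\g)^\g$, built from the commutant $\gb=[\ga_1,\ga_2]$ of the support ideals of $X,Y$ and their symplectic forms, and the naturality of this construction yields $a(g(X),g(Y)) = a(X,Y)$ for $g\in Aut(\g)$ and $a(dX,Y)+a(X,dY)=0$ for $d\in Der(\g)$. These identities are exactly what kill the cross terms in the two-argument case, and combined with the fact that $Out(\g)$ and $OutDer(\g)$ act on $(\Lambda^2\g)^\g$ they handle the one-argument case. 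Without some such normalized construction, your claimed ``separation into two independent $3$-cocycles'' is an unproved assertion, not a computation.

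Your proposed fallback is also not sound as stated. The separation proposition does not produce a monoidal retraction $\Aut_{\Tw}(U(\g)[[h]])\to\Aut_{bialg}(U(\g)[[h]])$: it only says every twisted automorphism is gauge isomorphic to a separated one. The assignment forgetting the twist component of a separated twisted automorphism is not even functorial on $2$-cells, since a gauge transformation $a$ between separated $(f,F)$ and $(f',F')$ satisfies $F\Delta(a)=(a\otimes a)F'$ and need not be group-like; and equipping such an assignment with a coherent monoidal structure amounts to trivializing precisely the mixed associator terms at issue. So the retraction route presupposes the proposition rather than proving it, whereas the paper's case-by-case check, resting on the equivariant $a(X,Y)$ from the geometric description, is the actual content of the proof.
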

\begin{proof}
We need to check that the associator is trivial if at least one of the arguments belongs to the subgroup
$(\Lambda^2\g)^\g[[h]]$. We have seen in section \ref{invtwi} that it is trivial if all three arguments are from
$(\Lambda^2\g)^\g[[h]]$. In section \ref{geomdes} we will construct the solution $a(X,Y)$ of (\ref{cobcom}) such that
$a(g(X),g(Y)) = a(X,Y)$ for any automorphism $g\in Aut(\g)$ (and $a(dX,Y) + a(X,dY) = 0$ for any derivation $d\in
Der(\g)$) thus covering the case when two of the arguments of the associator belong to $(\Lambda^2\g)^\g[[h]]$.
Finally, the fact $Out(\g)$ and $OutDer(\g)$ act on $(\Lambda^2\g)^\g$ guarantees that the associator is trivial when
one of the arguments belongs to the subgroup $(\Lambda^2\g)^\g[[h]]$.
\end{proof}

\subsection{Twists on $U(\g)[[h]]$}
Recall that $\Tw(k[[h]],U(\g)[[h]])$ is a complicated notation for the groupoid of twists on $U(\g)[[h]]$.

Writing the twist as a formal power series $F = \sum_{i=0}^\infty f_i h^i$, we get, for the degree $n$ part of the
2-cocycle equation: $$\sum_{i+j=n,\ i,j>n}((1\otimes f_i)(I\otimes\Delta)(f_j) - (f_i\otimes 1)(\Delta\otimes I)(f_j))
= 0.$$ The degree 1 part is simply $d(f_1) = 0$. Thus $f_1$ is an additive 2-cocycle and $r = Alt_2(f_1)$ belongs to
$\Lambda^2(\g)$. Up to a gauge isomorphism we can assume that $f_1 = r$. The degree 2 part reads as
\begin{equation}\label{deg2}
(1\otimes f_1)(I\otimes\Delta)(f_1) - (f_1\otimes 1)(\Delta\otimes I)(f_1) = d(f_2).
\end{equation}
The left hand side is in general a 2-cocycle (this can be checked directly). Thus the alternation of the left hand side
is an element of $\Lambda^3(\g)$. This element can be written explicitly if we assume as before that $f_1 = r$ is
bi-primitive and skew-symmetric. Indeed, for such a choice, the left hand side of (\ref{deg2}) has the form
$$r_{23}(r_{13} + r_{12}) - r_{12}(r_{13} + r_{23}).$$ Note that $$Alt_3(r_{23}r_{13}) = \frac{1}{6}(r_{23}r_{13} +
r_{31}r_{21} + r_{12}r_{32} - r_{13}r_{23} - r_{32}r_{12} - r_{21}r_{31}),$$ which for a skew-symmetric $r$ equals
$$\frac{1}{6}([r_{23},r_{13}] + [r_{23},r_{12}] + [r_{13},r_{12}]).$$ Note also that $$Alt_3(r_{23}r_{13}) =
Alt_3(r_{23}r_{12}) = - Alt_3(r_{12}r_{13}) = - Alt_3(r_{12}r_{23}).$$ Thus $$Alt_3(r_{23}(r_{13} + r_{12}) -
r_{12}(r_{13} + r_{23})) = $$ $$\frac{4}{6}([r_{23},r_{13}] + [r_{23},r_{12}] + [r_{13},r_{12}]).$$ Together with
(\ref{deg2}) it means that
\begin{equation}\label{cybe}
[r_{23},r_{13}] + [r_{23},r_{12}] + [r_{13},r_{12}] = 0.
\end{equation}
This equation is known as the {\em classical Yang-Baxter} equation ({\em CYBE}). Denote by $CYB(\g)$ the set of
solutions (in $\Lambda^2(\g)$) of the classical Yang-Baxter equation. Thus we have the following.
\begin{prop}
The above defines a map
\begin{equation}\label{qcl}
\pi_0(\Tw(k[[h]],U(\g)[[h]]))\to CYB(\g)
\end{equation}
from the set of gauge isomorphism classes to the set of solutions to CYBE.
\end{prop}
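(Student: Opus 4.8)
The statement asserts two things at once: that the recipe above produces a genuine element of $CYB(\g)$, and that it depends only on the gauge class of the twist. The plan is to define the map on the nose by $[F]\mapsto r:=Alt_2(f_1)$, where $f_1$ is the coefficient of $h$ in $F=\sum_{i\ge0}f_ih^i$ (with $f_0=1$), and then to verify three points in turn: that $r$ lands in $\Lambda^2(\g)$, that it is gauge-invariant, and that it solves the classical Yang--Baxter equation. The first point is immediate from the material above: the degree $1$ component of the $2$-cocycle equation is $\partial(f_1)=0$, so $f_1$ is a $2$-cocycle of the complex (\ref{tangcoh}), and the preceding proposition identifying the cohomology of (\ref{tangcoh}) with $\Lambda^*\g$ guarantees that $Alt_2(f_1)\in\Lambda^2(\g)$.

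For gauge-invariance I would compute the effect of a gauge transformation on $f_1$. A gauge transformation $(\iota,F)\to(\iota,F')$ is an invertible $a=1+a_1h+\dots$ satisfying $F'(a\otimes a)=\Delta(a)F$; reading off the coefficient of $h$ in $F'=\Delta(a)F(a\otimes a)^{-1}$ gives $f_1'-f_1=\Delta(a_1)-a_1\otimes1-1\otimes a_1=-\partial(a_1)$. Thus two gauge-equivalent twists have linear coefficients differing by a coboundary, and since alternation annihilates coboundaries ($Alt_n\circ\partial=0$, the very property that lets $Alt_n$ descend to cohomology) we get $Alt_2(f_1')=Alt_2(f_1)$. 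Hence $[F]\mapsto r$ is a well-defined map $\pi_0(\Tw(k[[h]],U(\g)[[h]]))\to\Lambda^2(\g)$.

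It remains to land in $CYB(\g)$, and here I would simply package the computation carried out just before the statement. Using the gauge freedom to normalise a representative with $f_1=r$ bi-primitive and skew, the degree $2$ part of the $2$-cocycle equation is (\ref{deg2}), whose left-hand side equals $r_{23}(r_{13}+r_{12})-r_{12}(r_{13}+r_{23})$ and whose right-hand side is the coboundary $\partial(f_2)$. Applying $Alt_3$ and using $Alt_3\circ\partial=0$ kills the right-hand side, while the explicit alternation of the left-hand side recorded above is $\frac{4}{6}\bigl([r_{23},r_{13}]+[r_{23},r_{12}]+[r_{13},r_{12}]\bigr)$; its vanishing is exactly (\ref{cybe}). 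The one point that genuinely carries the argument, and the step I expect to be the main obstacle to state cleanly, is the identity $Alt_n\circ\partial=0$ holding on the nose rather than merely up to cohomology: both the gauge-invariance step and the passage from (\ref{deg2}) to (\ref{cybe}) rely on alternations of coboundaries being literally zero, and I would justify this from the simplicial-cube description of (\ref{tangcoh}) underlying the cohomology proposition.
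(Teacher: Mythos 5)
Your proposal is correct and takes essentially the same route as the paper, which likewise extracts $r = Alt_2(f_1)$ from the degree-$1$ part of the $2$-cocycle equation, normalises $f_1 = r$ up to gauge, and derives (\ref{cybe}) by alternating the degree-$2$ equation (\ref{deg2}) and using that alternation kills the coboundary $d(f_2)$. Your only additions are the explicit gauge-invariance check $f_1' - f_1 = -\partial(a_1)$ together with $Alt_2\circ\partial = 0$ (which for $n=2$ is immediate from cocommutativity of $\Delta$, and in general is exactly what the proposition on the cohomology of (\ref{tangcoh}) presupposes when it says $Alt_n$ ``induces'' a map on cohomology) --- details the paper leaves implicit in the phrase ``up to a gauge isomorphism we can assume that $f_1 = r$''.
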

A section to the map (\ref{qcl}) was constructed in \cite{dr0} (see also \cite{es}). In particular, the map (\ref{qcl})
is surjective.

The map (\ref{qcl}) can be extended to a map of groupoids in the following way. For a gauge automorphism $a\in
U(\g)[[h]]$ of a twist $F\in \Tw(k[[h]],U(\g)[[h]])$ the condition $\Delta(a)F = F(a\otimes a)$ expanded in $h$ gives
$$\sum_{i+j=n}\Delta(a_i)f_j = \sum_{i+j=n}f_j(a_i\otimes 1 + 1\otimes a_i).$$ Since $a_0 = f_0 = 1$, the degree 1 part
is $\Delta(x) + r = r + x\otimes 1 + 1\otimes x$, where $x=a_1$ and $r = f_1$. Thus $x$ belongs to $\g$. After
cancellation the skew-symmetric degree 2 part reads $\Delta(x)r = r(x\otimes 1 + 1\otimes x)$ or
\begin{equation}\label{centr}
[r,x\otimes 1 + 1\otimes x] = 0.
\end{equation}
Define the {\em centraliser} $C_\g(r)$ of $r$ as the Lie subalgebra in $\g$ of those $x$ which satisfy (\ref{centr}).
Denote by $\CYB(\g)$ a disconnected groupoid with the set of (classes of) objects $CYB(\g)$ and with the abelian
automorphism groups $Aut_{\CYB(\g)}(r) = C_\g(r)$ where $r\in \CYB(\g)$. Then the map (\ref{qcl}) lifts to a functor of
groupoids $$\Tw(k[[h]],U(\g)[[h]])\to \CYB(\g).$$

The action of the cat-group $\Aut^1_\Tw(U(\g)[[h]])$ on $\Tw(k[[h]],U(\g)[[h]])$ corresponds to the action of
$(\Lambda^2(\g))^\g$ on $CYB(\g)$ given by addition: for $X\in (\Lambda^2(\g))^\g$ and $r\in CYB(\g)$ the sum $X + r$
belongs to $CYB(\g)$. The action of $Aut(\g[[h]])$ on $CYB(\g)$ boils down to the group action of $Aut(\g)$.

\begin{rem}
\end{rem}
It seems that the construction of \cite{dr0} (as well as \cite{es}) can be extended to a bijection $CYB_h(\g)\to
\pi_0(\Tw(k[[h]],U(\g)[[h]]))$. Here we understand $CYB_h(\g)$ as the set of solutions to CYBE in $\Lambda^2(\g)[[h]]$.
$\g$-equivariance of the construction from \cite{dr0} would imply that the map $\CYB_h(\g)\to \Tw(k[[h]],U(\g)[[h]])$
is a functor if we define automorphism groups in $\CYB_h(\g)$ to be exponents of Lie algebras $C_\g(r)[[h]]$.

\subsection{Geometric description}\label{geomdes}

For $X\in\Lambda^2(\g)$ define its {\em support subspace} to be $\ga(X) = \{(l\otimes I)(X), l\in\g^*\}\subset\g$. Note
that $X$ belongs to $\Lambda^2\ga(X)$ and defines a linear isomorphism $$\ga(X)^*\to\ga(X),\ l\mapsto (l\otimes)(X).$$
Thus $X$ is the {\em Casimir} element for the symplectic form $b=b(X)$ on $\ga(X)$: $$b((l\otimes I)(X),(l'\otimes
I)(X)) = (l\otimes l')(X)$$ $$(I\otimes b)(X\otimes I) = (b\otimes I)(I\otimes X) = I:\ga(X)\to\ga(X).$$

The following geometric characterisation of solutions to CYBE was obtained by Drinfeld \cite{dr0}.
\begin{prop}
The support $\s = \{(I\otimes l)(r),\ l\in\g^*\}$ of a solution $r\in\Lambda^2(\g)$ to CYBE is a Lie subalgebra of
$\g$. The symplectic form $b$ on $\s$ is a Lie 2-cocycle.
\end{prop}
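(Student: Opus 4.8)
The plan is to work entirely inside the support $\s$, on which $r$ is nondegenerate. Writing $\rho\colon\g^*\to\g$ for the map $\rho(l)=(l\otimes I)(r)$, one has $\s=\mathrm{Im}\,\rho$, and skew-symmetry of $r$ gives $(I\otimes l)(r)=-\rho(l)$, so the two possible definitions of the support agree as subspaces. Fixing a reduced expression $r=\sum_i e_i\otimes f_i$ with the $e_i$ linearly independent, we get $\s=\mathrm{span}\{f_i\}=\mathrm{span}\{e_i\}$ and $\rho(l)=\sum_i l(e_i)f_i$.

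First I would prove that $\s$ is a subalgebra by pairing the CYBE (\ref{cybe}) against covectors in two of its three tensor slots. Expanding the three commutators of $r_{12},r_{13},r_{23}$ in this basis, the term $[r_{23},r_{13}]=\sum_{i,j}e_j\otimes e_i\otimes[f_i,f_j]$ contracts under $l'\otimes l\otimes I$ to exactly $[\rho(l),\rho(l')]$. Hence by (\ref{cybe}) this bracket equals minus the same contraction of $[r_{23},r_{12}]+[r_{13},r_{12}]$; but each of those two terms has its first leg equal to one of the $e_j$, so the contraction is a linear combination of the $f_i$ (resp. of the $e_j$), i.e. an element of $\s$. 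This gives $[\s,\s]\subseteq\s$.

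Next I would treat the cocycle condition. On $\s$ the element $r$ is nondegenerate, so $\rho\colon\s^*\to\s$ is an isomorphism, and the defining relation of the symplectic form, $b((l\otimes I)(r),(l'\otimes I)(r))=(l\otimes l')(r)$, combined with skew-symmetry of $r$, yields the key identity $b(\rho(l),w)=-l(w)$ for all $w\in\s$. Writing $x=\rho(\xi)$, $y=\rho(\eta)$, $z=\rho(\zeta)$ and using $[x,y]\in\s$, this identity turns the Chevalley--Eilenberg coboundary $b([x,y],z)+b([y,z],x)+b([z,x],y)$ into the purely cyclic expression $\zeta([x,y])+\xi([y,z])+\eta([z,x])$. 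On the other hand, the subalgebra computation already gives $\zeta([x,y])=-\langle[r_{23},r_{13}],\xi\otimes\eta\otimes\zeta\rangle$, with the cyclic relabellings supplying the analogous identities for the other two terms. Since for skew $r$ the left-hand side $\mathrm{CYB}(r)$ of (\ref{cybe}) is a totally antisymmetric tensor in $\Lambda^3\s$ (as already witnessed by the $\mathrm{Alt}_3$ computation preceding (\ref{cybe})), its full contraction against $\xi\otimes\eta\otimes\zeta$ reassembles, up to a nonzero constant, precisely this cyclic sum. Thus CYBE forces the coboundary to vanish, so $b$ is a $2$-cocycle.

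The routine part is the basis bookkeeping; the one genuine obstacle is the final matching step, where one must show that the cyclic combination $\zeta([x,y])+\xi([y,z])+\eta([z,x])$ is exactly the contraction of the totally antisymmetric tensor $\mathrm{CYB}(r)$ against $\xi\wedge\eta\wedge\zeta$. This requires exploiting the total antisymmetry of $\mathrm{CYB}(r)$ to absorb the single noncyclic term $[r_{23},r_{13}]$ into the alternation and to track the resulting combinatorial constant. Passing to a Darboux basis of $\s$, in which $r=\sum_a p_a\wedge q_a$ and $b$ is the standard symplectic form, makes this identification transparent and pins down all signs.
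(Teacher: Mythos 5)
Your proposal is correct and takes essentially the same approach as the paper: the subalgebra claim is obtained by contracting CYBE against covectors in two tensor slots (the paper applies $(I\otimes l\otimes m)$ basis-freely where you contract the first two legs in a reduced basis), and the cocycle claim is obtained by recognizing the Chevalley--Eilenberg coboundary of $b$ as a full contraction of the CYBE tensor, which the paper gets by writing $b([(I\otimes l)(r),(I\otimes m)(r)],(I\otimes n)(r))$ as $(n\otimes m\otimes l)$ applied to a combination of commutators of $r$'s and then alternating in $l,m,n$. Your identity $b(\rho(l),w)=-l(w)$ and the cyclic-relabelling step are a clean repackaging of that alternation argument, and the matching constant you flag does work out (for skew $r$ the three terms of $\mathrm{CYB}(r)$ are cyclic rotations of one another, so the contraction against $\xi\otimes\eta\otimes\zeta$ equals exactly minus the cyclic sum $\zeta([x,y])+\xi([y,z])+\eta([z,x])$), so the plan closes.
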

\begin{proof}
Applying $(I\otimes l\otimes m)$ to CYBE we get $$[(I\otimes l)(r),(I\otimes m)(r)] + (I\otimes l')(r) + (I\otimes
m')(r) = 0,$$ where $l'(x) = l([x,(I\otimes m)(r)])$ and $m'(x) = m([x,(l\otimes I)(r)]$. Thus $\s\subset\g$ is a Lie
subalgebra.

To see that the form $b$ is a 2-cocycle note that $$b([(I\otimes l)(r),(I\otimes m)(r)],(I\otimes n)(r)) = (n\otimes
m\otimes l)([r_{12},r_{23}] - [r_{13},r_{23}).$$ Thus the alternation (in $l,m,n$) of the left hand side is $(n\otimes
m\otimes l)$ applied to a multiple of CYBE.
\end{proof}

In terms of $\s$ and $b$ the centraliser $C_\g(r)$ is the stabiliser $St_{C_\g(\s)}(b)$ of the form $b$ in the
centraliser $C_\g(\s)$ of the Lie subalgebra $\s$ in $\g$.

For $\g$-invariant $X\in\Lambda^2(\g)$, its support $\ga(X)$ is an ideal in $\g$ and the form $b(X)$ is $\g$-invariant.
\begin{lem}
For any $\g$-invariant $X\in\Lambda^2(\g)^{\g}$,
\begin{equation}\label{com}
[X_{13},X_{23}] = 0.
\end{equation}
\end{lem}
\begin{proof}
Since $X_{23}$ is $ad(\g)$-invariant $[X_{13},X_{23}] = - [X_{12},X_{23}]$ which equals $[X_{23},X_{12}]$. Again, by
$ad(\g)$-invariance of $X_{12}$, we have $[X_{23},X_{12}] = - [X_{13},X_{12}]$ which coincides with $[X_{13},X_{21}]$
since $X_{12} = - X_{21}$. Now, by $ad(\g)$-invariance of $X_{13}$, $[X_{13},X_{21}] = - [X_{13},X_{23}]$ which finally
implies
\begin{equation}
[X_{13},X_{23}] = - [X_{13},X_{23}].
\tag*{\qedhere}
\end{equation}
\end{proof}

It follows from the relation (\ref{com}) that $\ga(X)$ is an abelian ideal for any $X\in\Lambda^2(\g)^{\g}$:
$$[(l\otimes I)(X),(l'\otimes I)(X)] = [(l\otimes l'\otimes I)([X_{13},X_{23}]) = 0.$$ Thus we assign an abelian ideal
with an $\g$-invariant symplectic form to any element of $\Lambda^2(\g)^{\g}$. Conversely, for such a pair $(\ga,b)$
the Casimir element $X_b$ of $b$ obviously belongs to $\Lambda^2(\g)^{\g}$. Thus we have the following.
\begin{prop}\label{inv}
The support construction establishes a bijection between $\Lambda^2(\g)^{\g}$ and the set of pairs $(\ga,b)$, where
$\ga\subset \g$ is an abelian ideal and $b$ is a $\g$-invariant symplectic form of $\ga$
\end{prop}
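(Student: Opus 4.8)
The plan is to exhibit the support construction $X \mapsto (\ga(X), b(X))$ and the Casimir construction $(\ga, b) \mapsto X_b$ as mutually inverse maps. The forward direction is already well defined by the preceding discussion: for $\g$-invariant $X$ the support $\ga(X)$ is an ideal carrying the $\g$-invariant symplectic form $b(X)$, and relation (\ref{com}) forces $\ga(X)$ to be abelian. So the substantive geometric input is already in hand, and what remains is formal.

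First I would check that the backward map is well defined, i.e.\ that $X_b \in \Lambda^2(\g)^\g$. Skew-symmetry is automatic, since $b$ is a skew form and hence $X_b$, the element of $\ga \otimes \ga$ corresponding to the inverse of the isomorphism $b \colon \ga \to \ga^*$, lies in $\Lambda^2 \ga \subset \Lambda^2 \g$. For $\g$-invariance I would use that $\ga$ is an ideal, so $\g$ acts on $\ga$ by the restricted adjoint action and on $\ga^*$ by the coadjoint action; $\g$-invariance of $b$ says precisely that $b \colon \ga \to \ga^*$ is $\g$-equivariant, and therefore so are its inverse and the associated tensor $X_b$.

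Next I would verify that the two composites are the identity. For the round trip starting at $X \in \Lambda^2(\g)^\g$, the defining relation $(I \otimes b)(X \otimes I) = I$ on $\ga(X)$ states exactly that $X$ is the Casimir tensor of $b(X)$, whence $X_{b(X)} = X$. For the round trip starting at a pair $(\ga, b)$, I would observe that the map $l \mapsto (l \otimes I)(X_b)$ factors as the restriction $\g^* \to \ga^*$ followed by the isomorphism $\ga^* \to \ga$ determined by $X_b$; its image is therefore exactly $\ga$, giving $\ga(X_b) = \ga$, and the same inverse relation recovers $b(X_b) = b$.

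The only genuinely nontrivial ingredient is the abelianness of the support, already supplied by (\ref{com}); everything else is the standard equivariant duality between a nondegenerate skew form and its inverse Casimir tensor. The one point demanding care is the matching of the two $\g$-actions (adjoint on $\ga$, coadjoint on $\ga^*$) under $b$, which is what allows $\g$-invariance to pass between $b$ and $X_b$.
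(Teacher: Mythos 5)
Your proof is correct and takes essentially the same route as the paper: the forward map $X\mapsto(\ga(X),b(X))$ with abelianness supplied by (\ref{com}), and the inverse given by the Casimir element $X_b$, whose membership in $\Lambda^2(\g)^{\g}$ the paper simply declares obvious. You merely make explicit what the paper leaves implicit --- the equivariance of $b\colon\ga\to\ga^*$ under the adjoint/coadjoint actions and the two round-trip identities --- which is a sound filling-in of the same argument rather than a different one.
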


For a Lie algebra $\g$ denote by $nil(\g)$ its {\em nilradical}, i.e. the maximal nilpotent ideal ( = sum of all
nilpotent ideals).
\begin{cor}
For any Lie algebra $\g$, $$\Lambda^2(\g)^{\g} = \Lambda^2(nil(\g))^{\g}.$$ In particular $\Lambda^2(\g)^{\g} = 0$ if
$nil(\g) = 0$.
\end{cor}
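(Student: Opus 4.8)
The plan is to read the corollary off from Proposition \ref{inv} together with the elementary fact that abelian ideals are nilpotent, hence absorbed by the nilradical. First I would take an arbitrary $X\in\Lambda^2(\g)^\g$ and look at its support $\ga(X)=\{(l\otimes I)(X),\ l\in\g^*\}$. By the discussion following relation (\ref{com}), $\ga(X)$ is an \emph{abelian} ideal of $\g$, and $X$ lies in $\Lambda^2\ga(X)$. Since $X$ is a finite sum of decomposable tensors, $\ga(X)$ is finite-dimensional, and being abelian it is in particular a nilpotent ideal; hence $\ga(X)\subseteq nil(\g)$ by the very definition of the nilradical as the sum of all nilpotent ideals. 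Therefore $X\in\Lambda^2(nil(\g))$, and as $X$ is already $\g$-invariant we obtain $X\in\Lambda^2(nil(\g))^\g$. This establishes the inclusion $\Lambda^2(\g)^\g\subseteq\Lambda^2(nil(\g))^\g$.

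For the reverse inclusion I would simply note that $nil(\g)$ is an ideal, so the adjoint action of $\g$ restricts to $nil(\g)$ and hence to $\Lambda^2(nil(\g))\subseteq\Lambda^2(\g)$; an element of $\Lambda^2(nil(\g))$ which is $\g$-invariant for this restricted action is a fortiori $\g$-invariant as an element of $\Lambda^2(\g)$. Thus $\Lambda^2(nil(\g))^\g\subseteq\Lambda^2(\g)^\g$, and combining the two inclusions yields the claimed equality. The ``in particular'' statement is then immediate: if $nil(\g)=0$ then $\Lambda^2(nil(\g))^\g=\Lambda^2(0)^\g=0$, forcing $\Lambda^2(\g)^\g=0$.

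The genuine content of the corollary is entirely carried by Proposition \ref{inv} and relation (\ref{com}), which guarantee that the support of a $\g$-invariant $X$ is an abelian ideal; the only extra input is the standard observation that an abelian (more generally nilpotent) ideal sits inside the nilradical. Consequently I do not expect any real obstacle here. The sole point requiring a moment's care is to confirm that the support subspace is finite-dimensional, so that it is unambiguously a nilpotent ideal contained in $nil(\g)$, rather than to prove any new structural fact about $\g$.
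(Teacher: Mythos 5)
Your proof is correct and follows exactly the paper's argument: by Proposition \ref{inv} the support of any $X\in\Lambda^2(\g)^\g$ is an abelian ideal, abelian ideals are nilpotent and hence lie in $nil(\g)$, and the reverse inclusion is trivial. Your worry about finite-dimensionality of the support is harmless but superfluous, since an abelian ideal is nilpotent (of class one) regardless of its dimension.
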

\begin{proof}
Any abelian ideal is obviously nilpotent. Thus it must be contained in $nil(\g)$.
\end{proof}

Now we describe vector space structure of $\Lambda^2(\g)$ in terms of pairs $(\ga,b)$. Obviously, multiplication by
scalars is given by the following rule: $$c(\ga,b) = (\ga,cb),\ \mbox{for}\ c\in k\setminus\{ 0\}.$$ The geometric
presentation for the addition is more involved.

For $X_1,X_2\in\Lambda^2(\g)$, the direct sum of their supports $\ga_1\oplus \ga_2$ is equipped with the symplectic
form $b_1\oplus b_2$. Denote by $$(\ga_1\cap \ga_2)^\perp = \{(u_1,u_2)\in \ga_1\oplus \ga_2: b_1(u_1,x) = b_2(u_2,x)\
\forall x\in \ga_1\cap \ga_2\}$$ the orthogonal complement of the anti-diagonal image of $\ga_1\cap \ga_2$ in
$\ga_1\oplus \ga_2$. Denote by $K$ the intersection $(\ga_1\cap \ga_2)\cap (\ga_1\cap \ga_2)^\perp$ which coincides
with the kernel $ker(b_1|_{\ga_1\cap \ga_2} - b_2|_{\ga_1\cap \ga_2})$ of the difference of the symplectic forms $b_i$
restricted to $\ga_1\cap \ga_2$. The kernel of the surjection $\ga_1\oplus \ga_2\to \ga_1 + \ga_2\subset \g$ coincides
with the anti-diagonal image of $\ga_1\cap \ga_2$ and the short exact sequence $$\ga_1\cap \ga_2\to \ga_1\oplus
\ga_2\to \ga_1 + \ga_2$$ extends to a commutative diagram with short exact rows and columns:
\begin{equation}\label{exdiag}
\xymatrix{ K \ar[r]
\ar[d] & \ga_1\cap \ga_2 \ar[r] \ar[d] & ((\ga_1\cap \ga_2)/K)^* \ar[d] \\ (\ga_1\cap \ga_2)^\perp \ar[r] \ar[d] &
\ga_1\oplus \ga_2 \ar[r] \ar[d] & (\ga_1\cap \ga_2)^* \ar[d] \\ \ga \ar[r] & \ga_1 + \ga_2 \ar[r] & K^* }
\end{equation}
We can use the bottom row to define a subspace $\ga\subset \g$ as the kernel of the map $\ga_1 + \ga_2\to K^*$ induced
by the map $\ga_1\oplus \ga_2\to (\ga_1\cap \ga_2)^*$: $$(u_1,u_2)\mapsto (x\mapsto b_1(u_1,x) - b_2(u_2,x))\ x\in
\ga_1\cap \ga_2\}.$$ Then the left column allows us to define a symplectic form on $\ga$. Indeed, the kernel of the
restriction of $b_1\oplus b_2$ to $(\ga_1\cap \ga_2)^\perp$ is $K$. Thus $b_1\oplus b_2$ induces a non-degenerate
skew-symmetric bilinear form $b$ on $(\ga_1\cap \ga_2)^\perp/K =\ga$.

\begin{prop}
The subspace $\ga\subset \g$ is the support of the sum $X_1 + X_2\in\Lambda^2(\g)$. The Casimir element of the
symplectic form $b$ coincides with $X_1+X_2$.
\end{prop}
\begin{proof}
First we need to verify that the support of $X_1+X_2$ lies in $\ga$. For that purpose we need a more explicit
description of the map $\ga_1 + \ga_2\to K^*$. For $(l_1\otimes I)(X_1) + (l_2\otimes I)(X_2)\in \ga_1+\ga_2$ it gives
a linear function $K\to k$: if we write an element $x\in K$ as $x = (m_1\otimes I)(X_1) = (m_2\otimes I)(X_2)$ the
value of this function on $x$ is $$(l_1\otimes m_1)(X_1) - (l_2\otimes m_2)(X_2).$$ Note that this expression is zero
for any $l_i\in \g^*$ such that $(l_1\otimes I)(X_1) = (l_2\otimes I)(X_2)$ so it is well defined on $\ga_1 + \ga_2$
(does not depend on the presentation $(l_1\otimes I)(X_1) + (l_2\otimes I)(X_2)$). The function associated to an
element $(l\otimes I)(X_1) + (l\otimes I)(X_2)$ of the support of $X_1+X_2$ is clearly zero. Thus the support of
$X_1+X_2$ belongs to $\ga$.

Now it suffices to check that $(b\otimes I)(I\otimes(X_1 + X_2))$ is the identity on $\ga$, or that $(b\otimes
l)(I\otimes(X_1 + X_2)) = l$ for any linear function $l$ on $\ga$. The bilinear form $b$ on $\ga$ assigns to
$(l_1\otimes I)(X_1) + (l_2\otimes I)(X_2), (m_1\otimes I)(X_1) + (m_2\otimes I)(X_2)\in \ga$ the number $$(l_1\otimes
m_1)(X_1) - (l_2\otimes m_2)(X_2).$$ In particular, for $x = (l_1\otimes I)(X_1) + (l_2\otimes I)(X_2)\in \ga$,
$$(b\otimes l)(x\otimes(X_1 + X_2)) = b(x,(I\otimes l)(X_1) + (I\otimes l)(X_2)) = $$
\begin{equation}
(l_1\otimes l)(X_1) + (l_2\otimes l)(X_2) = l(x).
\tag*{\qedhere}
\end{equation}
\end{proof}

In particular, if both $X_1,X_2$ are $\g$-invariant we get a geometric description of the addition on
$\Lambda^2(\g)^\g$.
\begin{rem}
\end{rem}
According to proposition \ref{inv}, the support subspace $\ga$ of $X_1 + X_2$ must be an abelian ideal. The fact that
$\ga$ is an ideal follows from the $\g$-equivariance of the construction for $\ga$, while the abelian property can be
checked by direct computation. First note that the commutant $[\ga_1,\ga_2]$ lies in $K$. As a consequence, the map
$\ga_1 + \ga_2\to K^*$ is a homomorphism of Lie algebras (with abelian structure on $K^*$). Thus $\ga$ is a Lie
subalgebra. To see that the commutant $[\ga,\ga]$ is zero, it is enough to check that $b_1([u_1+u_2,v_1+v_2],y) = 0$
for any $u_1+u_2,v_1+v_2\in \ga$ and any $y\in\ga_1$. Writing $$[u_1+u_2,v_1+v_2] = [u_1,v_2] + [u_2,v_1] = [u_1,v_2] -
[v_1,u_2],$$ we need to verify that $b_1([u_1,v_2],y) = b_1([v_1,u_2],y)$. Indeed, by $\g$-invariance of $b_i$ and the
defining relations for $u_1+u_2,v_1+v_2$ (together with $[\ga_1,\ga_2]\subset K$), we have the chain of equalities:
$$b_1([u_1,v_2],y) = - b_1(u_1,[y,v_2]) = - b_2(u_2,[y,v_2]) = $$ $$b_2([y,u_2],v_2) = b_1([y,u_2],v_1) = -
b_1(y,[v_1,u_2]) = b_1([v_1,u_2],y).$$

Note also that the arrows of the diagram (\ref{exdiag}) are homomorphisms of vector spaces and not of Lie algebras. To
turn it into a diagram of Lie algebras it suffices to introduce an appropriate Lie algebra structure on the direct sum
$\ga_1\oplus \ga_2$. Denote by $\ga_1\bowtie \ga_2$ the Lie algebra of pairs $(x_1,x_2)$, $x_i\in\ga_i$ with the
bracket: $$[(x_1,x_2),(y_1,y_2)] = \frac{1}{2}([x_2,y_1]+[x_1,y_2],[x_2,y_1]+[x_1,y_2]).$$ Then the maps of the diagram
\begin{equation}\label{exdiagm}
\xymatrix{ K \ar[r] \ar[d] & \ga_1\cap \ga_2 \ar[r] \ar[d] & ((\ga_1\cap \ga_2)/K)^* \ar[d] \\
(\ga_1\cap \ga_2)^\perp \ar[r] \ar[d] &  \ga_1\bowtie \ga_2 \ar[r] \ar[d] & (\ga_1\cap \ga_2)^* \ar[d] \\ \ga \ar[r] &
\ga_1 + \ga_2 \ar[r] & K^* }
\end{equation}
become homomorphisms of Lie algebras (one should think of objects in the right column as
abelian Lie algebras).

When $X_1$ is $\g$-invariant and $X_2$ is a solution to CYBE the sum $X_1 + X_2$ is also a solution to the CYBE. In
this case the support space $\ga$ is still a Lie algebra. Again the diagram (\ref{exdiag}) can be made into a diagram
of Lie algebras. One needs to think of $\ga_1\bowtie \ga_2$ as the Lie algebra with the bracket:
$$[(x_1,x_2),(y_1,y_2)] = \frac{1}{2}([x_2,y_1]+[x_1,y_2],[x_2,y_1]+[x_1,y_2]+2[x_2,y_2]).$$ In other words, the
diagram (\ref{exdiagm}) defines an action of the set of abelian ideals with invariant symplectic forms (which coincides
with the vector space $(\Lambda^2\g)^\g$) on the set of subalgebras with non-degenerate 2-cocycles (the set $CYB(\g)$).

Now we give a geometric description of the map
\begin{equation}\label{3vect}
(\Lambda^2\g)^\g\otimes (\Lambda^2\g)^\g\to (S^3\g)^\g,
\end{equation}
which for $X_1,X_2$ gives a coboundary $a$ for the commutator $$[X_1,X_2] = a\otimes 1 + 1\otimes a - \Delta(a).$$ Here
we identify $(S^*\g)^\g$ with the centre $Z(U(\g)) = U(\g)^\g$ of the universal enveloping algebra by means of the
Kirillov-Duflo isomorphism.

For two abelian ideals $\ga_1,\ga_2\subset \g$ denote by $\gb = [\ga_1,\ga_2]$ their commutant. Note that orthogonal
complements $\gb^\perp_{b_i}$ with respect to invariant symplectic forms $b_i$ on $\ga_i$ have the following
commutation property:
\begin{equation}\label{commut}
[\gb^\perp_{b_1},\ga_2] = [\ga_1,\gb^\perp_{b_2}] = 0.
\end{equation}
Indeed, for $x\in \gb^\perp_{b_1}, y\in\ga_2$, $$b_1([x,y],z) = - b_1(x,[z,y]) = 0,\quad \forall z\in \ga_1.$$ Hence
$[x,y] = 0$. Similarly for $[\ga_1,\gb^\perp_{b_2}]$. Now define a map $\gb^*\otimes\gb^*\to \gb$ by sending
$l_1\otimes l_2$ into $[x_1,x_2]$, where $x_i\in\ga_i$ are defined by $$b_i(x_i,u) = l_i(u),\quad \forall u\in\gb.$$
The elements $x_i$ are defined up to $\gb^\perp_{b_i}$ so in view of the relations (\ref{commut}) the commutator
$[x_1,x_2]$ is well defined. Obviously, the map $\gb^*\otimes\gb^*\to \gb$ corresponds to a 3-vector $c\in \gb^{\otimes
3}$, which is $\g$-invariant by the construction. To see that this 3-vector is symmetric, think of it as a map
$$(\gb^*)^{\otimes 3}\to k,\quad l_1\otimes l_2\otimes l_3\mapsto l_3([x_1,x_2]).$$ Find $y_i\in\ga_i$ such that
$$b_i(y_i,u) = l_3(u),\quad \forall u\in\gb.$$ Then $$l_3([x_1,x_2]) = b_1(y_1,[x_1,x_2]) = - b_1([y_1,x_2],x_1) =
l_1([y_1,x_2]),$$ which in terms of the 3-vector $c$ means that $c_{13} = c$. Similarly $$l_3([x_1,x_2]) =
b_2(y_2,[x_1,x_2]) = - b_2([x_1,y_2],x_2) = l_2([x_1,y_2]),$$ which means that $c_{23} = c$.

\begin{lem}
The element $a$ corresponding to the invariant $c\in S^3(\g)^\g$ via the isomorphism $S^*(\g)^\g\to Z(U(\g))$ satisfies
the equation $$[X_1,X_2] = a\otimes 1 + 1\otimes a - \Delta(a).$$
\end{lem}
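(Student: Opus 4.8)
The plan is to push the whole identity down into the Lie subalgebra $\ga_1+\ga_2\subset\g$ generated by the two supports and to exploit that this subalgebra is $2$-step nilpotent. First I would record that, by the discussion in section~\ref{invtwi}, $[X_1,X_2]$ is a symmetric $\g$-invariant additive $2$-cocycle; since it is symmetric its alternation vanishes, so by the $\g$-invariant cohomology lemma its class in $(\Lambda^2\g)^\g$ is zero and it is a coboundary $\partial(a)$ for some central $a$, determined up to a primitive central element (that is, up to $Z(\g)$). Everything then takes place inside $U(\ga_1+\ga_2)$, as $X_1,X_2\in\Lambda^2(\ga_1+\ga_2)$ and $a\in U(\ga_1+\ga_2)$. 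Here $\gb=[\ga_1,\ga_2]=[\ga_1+\ga_2,\ga_1+\ga_2]$ is central, because the $\ga_i$ are abelian ideals and hence $[\gb,\ga_i]=0$; thus $\ga_1+\ga_2$ is $2$-step nilpotent.

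The key simplification is that on a $2$-step nilpotent Lie algebra $(\mathrm{ad}\,x)^2=0$ for every $x$, so the Duflo density $\det\!\big(\tfrac{\sinh(\mathrm{ad}\,x/2)}{\mathrm{ad}\,x/2}\big)^{1/2}$ is identically $1$ and the Kirillov--Duflo isomorphism $S^\ast(\ga_1+\ga_2)^{\ga_1+\ga_2}\to Z(U(\ga_1+\ga_2))$ reduces to the PBW symmetrisation map. Since $c\in S^3(\gb)$ and the elements of $\gb$ are central and mutually commuting, the associated $a$ is simply the commuting product in $U(\ga_1+\ga_2)$ of the three $\gb$-factors of $c$, with no lower-order corrections.

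Next I would compute both sides explicitly. Writing $X_1=\sum_k a_k\otimes a_k'$ and $X_2=\sum_l b_l\otimes b_l'$ with $a_\bullet\in\ga_1$, $b_\bullet\in\ga_2$, and splitting each product $a_kb_l$ into its symmetric part $a_k\cdot b_l$ and the commutator $[a_k,b_l]\in\gb$, the symmetric--symmetric and commutator--commutator contributions cancel between $X_1X_2$ and $X_2X_1$, leaving
$$[X_1,X_2]=\sum_{k,l}\big((a_k\cdot b_l)\otimes[a_k',b_l']+[a_k,b_l]\otimes(a_k'\cdot b_l')\big)=P+t(P),$$
where $P=\sum_{k,l}(a_k\cdot b_l)\otimes[a_k',b_l']$ and $t$ is the flip (the second summand equals $t(P)$ by skew-symmetry of the $X_i$). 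On the other side $\partial(a)=a\otimes1+1\otimes a-\Delta(a)=-\bar\Delta(a)$, and because $a$ is a cubic in the primitive central elements of $\gb$, the reduced coproduct $\bar\Delta$ distributes over the three factors of each monomial, landing in $S^2\gb\otimes\gb$ and $\gb\otimes S^2\gb$. The match is then made on the degree-$(2,1)$ part, using the Casimir identity $(b_i\otimes I)(I\otimes X_i)=I$ together with the orthogonality relations (\ref{commut}), which is what forces $P$ to land in $S^2\gb\otimes\gb$ and reproduces the defining bracket $[x_1^i,x_2^j]$ of $c$; the degree-$(1,2)$ part then follows by applying $t$.

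The hard part is the scalar bookkeeping in this final matching. The reduced coproduct of a cubic central monomial splits into several degree-$(2,1)$ pieces, so $\bar\Delta(a)$ carries a combinatorial multiplicity that has to be reconciled against the single contraction defining $c$; getting this scalar exactly right, and consistent with the chosen normalisations of the symplectic Casimirs $X_i$ and of the Kirillov--Duflo identification, is the delicate point, and is precisely where one must use the explicit definition of $c$ rather than merely its symmetry. Once the degree-$(2,1)$ components are matched with the correct constant, the equality $[X_1,X_2]=\partial(a)$ holds on the nose in $U(\ga_1+\ga_2)\subset U(\g)^{\otimes 2}$.
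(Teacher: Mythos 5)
Your plan is essentially the paper's own proof in coordinate-free form, and its structural steps are correct: the paper implements the same reduction by choosing Lagrangian subspaces $L_i\subset\ga_i$ containing $\gb=[\ga_1,\ga_2]$, whence $[L_1,\ga_2]=[\ga_1,L_2]=0$ and both sides become explicit in Darboux-type bases, while you reach the same intermediate identity $[X_1,X_2]=P+t(P)$ with $P\in S^2\gb\otimes\gb$ via the symmetric/commutator splitting, the relations (\ref{commut}) (which make the commutator legs factor through $\gb^*\otimes\gb^*$ and hence through $c$), and the Casimir identity (which contracts the remaining legs onto $\gb$). One misdirection to note: the isomorphism in the statement is $S^*(\g)^\g\to Z(U(\g))$ for the ambient $\g$, and the Duflo map is not natural under subalgebra inclusions, so computing the Duflo density of the $2$-step nilpotent subalgebra $\ga_1+\ga_2$ does not literally settle the point. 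The correct (equally short) argument is that $\mathrm{ad}(x)\mathrm{ad}(y)=0$ as an operator on all of $\g$ for $x,y\in\gb$ (since $\gb$ is an abelian ideal of $\g$), so every trace-type correction to symmetrisation annihilates $S^3(\gb)$; this is what the paper's closing remark $c\in S^*(\gb)^\g=U(\gb)^\g\subset Z(U(\g))$ is encoding.

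The genuine gap is exactly the step you flag and then assert away: the scalar. It is not neutral bookkeeping, because with the definition (\ref{3vect}) of $c$ the naive cubic contraction $a_0=\sum_{s,t}x_sx_t\,c(x^s,x^t)$ fails the identity by a factor of $-3$. Indeed, writing $c=\sum c^1\otimes c^2\otimes c^3$ and $Q=\sum c^1c^2\otimes c^3$, your own decomposition gives $[X_1,X_2]=Q+t(Q)$, whereas total symmetry of $c$ (i.e.\ $c_{13}=c_{23}=c$) makes each of the three $(2,1)$-splittings in $\overline{\Delta}(a_0)$ equal to $Q$, so $\partial(a_0)=-\overline{\Delta}(a_0)=-3\bigl(Q+t(Q)\bigr)=-3[X_1,X_2]$; the element that actually works is $a=-\frac{1}{3}a_0$. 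The paper's Heisenberg example \ref{heisen} confirms this normalisation: there $a(v\wedge c,u\wedge c)=\frac{b(u,v)}{3}c^3$, and the $\frac{1}{3}$ together with the sign reversal $b(u,v)=-b(v,u)$ is precisely the constant your ``delicate point'' has to produce (the paper's proof of the lemma is itself silent about it, writing $a=\sum_{i,j}e_if_j[e^i,f^j]$ with no constant, but the example pins it down). As written, your argument establishes only that $[X_1,X_2]$ and $\partial(a)$ are proportional; matching the single contraction in $P$ against the three $(2,1)$-splittings of the reduced coproduct of a symmetric cube, with the Casimir sign conventions fixed, is a short but unavoidable computation and must be carried out to complete the proof.
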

\begin{proof}
Since $\gb = [\ga_1,\ga_2]$ is isotropic with respect to the forms $b_i$ we can choose subspaces $L_i\subset \ga_i$,
which are Lagrangian with respect to $b_i$ respectively and contain $\gb$. Write $\ga_i = L_i\oplus L_i^*$. A choice of
bases $L_1 = <e_i>,\ L_2 = <f_j>$ allows us to write $$X_1 = \sum_i e_i\wedge e^i,\quad X_2 = \sum_j f_j\wedge f^j.$$
Here $e^i$ ($f^j$) is the dual basis in $L_1^*$ (respectively $L_2^*$). By the choice, $[L_1,L_2]=0$ so $$[X_1,X_2] =
\sum_{i,j}[e_i\wedge e^i,f_j\wedge f^j] = \sum_{i,j}e_ie_j\odot [e^i,f^j].$$ Here $\odot$ is the symmetric product
$x\odot y = x\otimes y + y\otimes x$. Since the commutator pairing $[\ ,\ ]:L_1^*\otimes L^*_2\to \gb$ factors through
$c:\gb^*\otimes\gb^*\to\gb$ we can write $$a = \sum_{i,j}e_if_j[e^i,f^j] = \sum_{s,t}x_sx_tc(x^s,x^t),$$ where $x_s$ is
a basis of $\gb$. Finally note that the element $c$ belongs to $S^*(\gb)^\g = U(\gb)^\g\subset Z(U(\g))$ so we do not
need to worry about the particular choice of isomorphism $S^*(\g)^\g\to Z(U(\g))$.
\end{proof}

\begin{rem}
\end{rem}
It follows from the invariance of the construction of $c$ that $c(g(X_1),g(X_2)) = c(X_1,X_2)$ for an automorphism $g$
of the Lie algebra $\g$.

We can use a symmetric 3-vector $c\in \gb^{\otimes 3}$ on a vector space $\gb$ to define a structure of (meta-abelian)
Lie algebra $\g(\gb,c)$ on the vector space $\gb^*\oplus \gb^*\oplus \gb$: $$[(l_1,m_1,x_1),(l_2,m_2,x_2)] =
(0,0,(l_1\otimes m_2\otimes I - l_2\otimes m_1\otimes I)(c)).$$ The subspaces $$\ga_1 = \gb^*\oplus 0\oplus \gb,\quad
\ga_2 = 0\oplus \gb^*\oplus \gb$$ are abelian ideals in $\g(\gb,c)$. The symplectic forms $b_i$ on $\ga_i$
$$b_1((l_1,0,x_1),(l_2,0,x_2)) = l_1(x_2) - l_2(x_1),$$ $$b_2((0,m_1,x_1),(0,m_2,x_2)) = m_1(x_2) - m_2(x_1)$$ are
$\g(\gb,c)$-invariant. Indeed, by $c_{13} = c$ $$b_1([(0,m,x),(l_1,0,x_1)],(l_2,0,x_2)) = b_1(-(0,0,(l_1\otimes m\otimes
I)(c)),(l_2,0,x_2)) =$$ $=(l_1\otimes m\otimes l_2)(c),$ which coincides with $$b_1((l_1,0,x_1),[(l_2,0,x_2),(0,m,x)]) =
b_1((l_1,0,x_1),(0,0,(l_2\otimes m\otimes I)(c))) = (l_2\otimes m\otimes l_1)(c).$$ Similarly, by $c_{23} = c$
$$b_2([(0,m_1,x_1),(l,0,x)],(0,m_2,x_2)) = b_1(-(0,0,(l\otimes m_1\otimes I)(c)),(0,m_2,x_2)) =$$ $=(l\otimes m_1\otimes
m_2)(c)$, which coincides with $$b_2((0,m_1,x_1),[(l,0,x),(0,m_2,x_2)]) = b_2((0,m_1,x_1),(0,0,(l\otimes m_2\otimes I)(c))) =$$
$=(l\otimes m_2\otimes m_1)(c).$ Clearly, the 3-vector corresponding to the pair $(\ga_i,b_i)$ via (\ref{3vect}) is $c$.

The above construction is universal in the following sense. For a pair $(\ga_i,b_i)$ of abelian ideals with symplectic
invariant forms, the sum $\ga_1 + \g_2$ (which is a meta-abelian ideal) maps on to $\g(\gb,c)$, where $\gb =
[\ga_1,\ga_2]$ and $c$ is the 3-vector defined by (\ref{3vect}). Note that $\gb$ is isotropic in both $\ga_i$ so the
maps $\ga_i\to\gb^*$ defined by the forms $b_i$ factor through $\ga_i/\gb$. Thus we have a diagram $$\xymatrix{\gb
\ar[d] \ar[r] & \ga_1 + \ga_2 \ar[d] \ar[r] & \ga_1/\gb\oplus \ga_2/\gb \ar[d] \\ \gb \ar[r] & \g(\gb,c) \ar[r] &
\gb^*\oplus \gb^*}$$ The middle vertical map is a homomorphism of Lie algebras since the right vertical map is
compatible with commutator pairings $\ga_1/\gb\otimes \ga_2\to\gb$ (induced from $\ga_1 + \g_2$) and
$\gb^*\otimes\gb^*\to\gb$ (in $\g(\gb,c)$).

\begin{exam}\label{heisen}Heisenberg algebra
\end{exam}
Let $(V,b)$ be a symplectic vector space and $\g = {\cal H}(V,b) = V\oplus \langle c\rangle$ its Heisenberg Lie algebra with the central generator
$c$. Then the map $v\mapsto v\wedge c = v\otimes c - c\otimes v$ is an isomorphism $V\to (\Lambda^2\g)^\g$. Clearly
$v\wedge c$ is $\g$-invariant: $$[u\otimes 1 + 1\otimes u,v\wedge c] = [u,v]\wedge c = b(u,v)c\wedge c = 0.$$ To see
that there are no other $\g$-invariant elements in $\Lambda^2\g$ note that $\Lambda^2\g =
\Lambda^2V\oplus V$ with the $\g$-action on the first component $V\otimes\Lambda^2V\to V$ being induced by the bilinear
form $b$ and hence having no invariants.

The commutator (in $U(\g)^{\otimes 2}$) of two elements from $(\Lambda^2\g)^\g$ $$[v\wedge c,u\wedge c] =
b(v,u)(c\otimes c^2 + c^2\otimes c)$$ is a coboundary $a\otimes 1 + 1\otimes a - \Delta(a)$. For example, we can choose
$a:\Lambda^2((\Lambda^2\g)^\g)\to U(\g)$ to be $$a(v\wedge c,u\wedge c) = \frac{b(u,v)}{3}c^3.$$

The subalgebras in ${\cal H}(V,b)$ break into two classes depending on whether or not they contain the centre. A
subalgebra of the first type has the form $U\oplus <c>$ for an arbitrary subspace $U\subset V$, while the second type
is simply an {\em isotropic} subspace $U$ of $V$ (isotropic means that the restriction of the symplectic form $b$ to
$U$ is zero). A non-degenerate 2-cocycle on a subalgebra of the second type is a symplectic form. For a subalgebra of
the first type $U\oplus <c>$ with a non-degenerate 2-cocycle $\beta$ denote by $U'$ the orthogonal complement (with
respect to $\beta$) of $c$ in $U$. Clearly the restriction of $\beta$ on $U'$ is non-degenerate. Moreover, the
2-cocycle condition implies that $U'$ is isotropic with respect to $b$. This corresponds to the fact that any $X\in
CYB(\g)$ for $\g = {\cal H}(V,b)$ can be uniquely written as $Y + v\wedge c$ for $Y\in CYB(\g)$ supported on a
subalgebra of the second type. In particular, $(\Lambda^2\g)^\g$ acts transitively on $CYB(\g)$ with orbits
corresponding to subalgebras of the second type with symplectic forms on them.

\section{Crossed products with respect to actions by twisted automorphisms}
In this section we construct crossed product of a bialgebra with a group acting by twisted automorphisms. 

We say that a group $G$ {\em acts by twisted automorphisms} on a bialgebra $H$ if  a map of Cat-groups (as in section \ref{catgroups}): 
$(\tau,\theta):G\to\Aut_{\Tw}(H)$ is given: $$\tau:G\to Aut_\Tw(H),\quad \tau(g) = (g,F_g),$$ $$\theta:G\times G\to
Z(H)^\cross,$$ satisfying 
\begin{equation}\label{2coc}
f(\theta(g,h))\theta(f,gh) = \theta(f,g)\theta(fg,h),
\end{equation}
\begin{equation}\label{cobound}
f(F_g)F_f\Delta(\theta(f,g)) = (\theta(f,g)\otimes\theta(f,g))F_{fg}.
\end{equation}

A {\em crossed product} of a bialgebra $H$ with a group $G$ with respect to the action by twisted automorphisms
$(\tau,\theta)$ is a bialgebra $H*_{(\tau,\theta)}G$, which as a vector space is spanned by symbols $x*f$ for $x\in H$,
$f\in G$ subject to the linearity in the first argument $(x+y)*f = x*f + y*f$; with the product and the coproduct given
by the formulas: $$(x*f)(y*g) = (xf(y)\theta(f,g))*fg,$$ $$\Delta(x*f) = (\Delta(x)F_f^{-1})*(f\otimes f).$$ It is quite straightforward to check that all bialgebra axioms are satisfied. Indeed, associativity of multiplication follows from the condition (\ref{2coc}):
$$(x*f)((y*g)(z*h)) = (x*f)(yg(z)\theta(g,h)*gh) = xf(y)fg(z)f(\theta(g,h))\theta(f,gh)*fgh$$ 
coincides with 
$$((x*f)(y*g))(z*h) = (xf(y)\theta(f,g)*fg)(z*h) = xf(y)\theta(f,g)fg(z)\theta(fg,h)*fgh.$$ 
Coassociativity of comultiplication follows from the 2-cocycle property of twists: 
$$(\Delta\otimes I)\Delta(x*f) = (\Delta\otimes I)((\Delta(x)F_f^{-1})*(f\otimes f)) =$$ $$(\Delta\otimes I)\Delta(x)(\Delta\otimes I)(F_f)^{-1}(F_f\otimes 1)^{-1}*(f\otimes f\otimes f)$$ 
is equal to 
$$(I\otimes\Delta)\Delta(x*f) = (I\otimes\Delta)((\Delta(x)F_f^{-1})*(f\otimes f)) =$$ $$(I\otimes\Delta)\Delta(x)(I\otimes\Delta)(F_f)^{-1}(1\otimes F_f)^{-1}*(f\otimes f\otimes f).$$ 
Finally, the compatibility of multiplication and comultiplication follows from the condition (\ref{cobound}): 
$$\Delta((x*f)(y*g)) = \Delta(x*f)\Delta(y*g) = (\Delta(x)F_f^{-1}*(f\otimes f))(\Delta(y)F_g^{-1}*(g\otimes g)) =$$ 
$$\Delta(x)F_f^{-1}(f\otimes f)\Delta(y)(f\otimes f)(F_g)^{-1}(\theta(f,g)\otimes\theta(f,g))*(fg\otimes fg),$$ 
which, by the definition of twisted homomorphism, coincides with 
$$\Delta(x)\Delta(f(y))F_f^{-1}(f\otimes f)(f\otimes f)(F_g)^{-1}(\theta(f,g)\otimes\theta(f,g))*(fg\otimes fg).$$ 
At the same time 
$$\Delta(xf(y)\theta(f,g)*fg) = \Delta(x)\Delta(f(y))\Delta(\theta(f,g))F_{fg}^{-1}*(fg\otimes fg).$$

As an example we consider the case of a universal enveloping algebra. Let $A$ be a subspace of $(\Lambda^2\g)^\g$. Define a map of Cat-groups $A\to \Aut_\Tw(U(\g)[[h]])$ by assigning to $X\in A$ the twisted automorphism $(I,exp(Xh))$ and defining $\theta(X,Y)$ as $exp(\frac{1}{2}a(X,Y)h^2)$, where $a(X,Y)\in Z(U(\g))$ is a solution of $$[X,Y] = a(X,Y)\otimes 1 + 1\otimes a(X,Y) - \Delta(a(X,Y)),$$ satisfying the 2-cocyle condition $$a(X,Y) + a(X+Y,Z) = a(Y,Z) + a(X,Y+Z).$$ The crossed product $U(\g)[[h]]*A$ will have the following rules for multiplication and comultiplication: $$(x*X)(y*Y) = xy\ exp(\frac{1}{2}a(X,Y)h^2)*(X+Y),$$ $$\Delta(x*X) = \Delta(x)exp(Xh)*(X\otimes X).$$ Writing formally $X$ as $exp(l_Xh)$ the rules turn into the following $$exp(l_Xh)exp(l_Yh) = exp(a(X,Y)h^2 + (l_X + l_Y)h),$$ $$\Delta(exp(l_Xh)) = exp(Xh)(exp(l_Xh)\otimes exp(l_Yh)).$$ These can be resolved by setting 
\begin{equation}\label{commuta}
[l_X,l_Y] = -a(X,Y),
\end{equation}
\begin{equation}\label{comulti}
\Delta(l_X) = X + l_X\otimes 1 + 1\otimes l_X.
\end{equation}
We can formalise these equations by formally adding new generators $l_X,\ X\in A$  to $U(\g)$ subject to the relation (\ref{commuta}), with the comultiplication extending the one on $U(\g)$ and satifying (\ref{comulti}). The resulting object $U(\g)[A,a]$ is a bialgebra. Indeed, the only thing to check in this abstract setting is that the relation (\ref{commuta}) is preserved by the comultiplication: $$\Delta([l_X,l_Y] + a(X,Y)) = [X,Y] + [l_X,l_Y]\otimes 1 + 1\otimes [l_X,l_Y] + \Delta(a(X,Y)) =$$ $$([l_X,l_Y] + a(X,Y))\otimes 1 + 1\otimes ([l_X,l_Y] + a(X,Y)).$$

\begin{exam}
\end{exam}
Let $\g = {\cal H}(V,b) = V\oplus \langle c\rangle$ be the Heisenberg Lie algebra of the symplectic vector space $(V,b)$. Let $A$ be the space $V$ mapped into $(\Lambda^2\g)^\g$ by $v\mapsto v\wedge c$. As in the example \ref{heisen}, let $a(v,u) = \frac{b(u,v)}{3}c^3.$Then the extra generators $l_v,\ v\in V$ of the algebra $U({\cal H}(V,b))[V,a]$ satisfy $$[l_v,l_u] = \frac{b(u,v)}{3}c^3,$$ with the comultiplication defined by $$\Delta(l_v) = v\otimes c - c\otimes v + l_v\otimes 1 + 1\otimes l_v.$$

\end{document}